\newcommand{\R}{{\mathbb R}}
\def\F{{\cal F}}
\newtheorem{defn}{Definition}[section]
\newtheorem{theo}[defn]{Theorem}
\newtheorem{lem}[defn]{Lemma}
\newtheorem{prop}[defn]{Proposition}
\newtheorem{cor}[defn]{Corollary}
\newtheorem{rem}[defn]{Remark}
\newenvironment{proof}{{\bf Proof }}{{\vskip 0.1cm \hfill$\Box$}}
\begin{document}
\centerline{\Huge \sf Pathwise uniqueness of the squared Bessel }
\vspace{0.5cm}
\centerline{\Huge \sf and CIR processes with skew reflection}
\vspace{0.5cm}
\centerline{\Huge \sf  on a deterministic time dependent curve}
\vspace{1cm}
\centerline{{\large \sf Gerald  TRUTNAU}{\footnote{Supported by the Research Settlement Fund for new faculty of Seoul National University, the research project "Advanced Research and Education of Financial Mathematics" at Seoul National University, the SFB-701 at Bielefeld University, and the BIBOS-Research Center at Bielefeld University.}}}
\vspace{1cm}
\hspace{-0.65cm}Department of Mathematical Sciences and Research Institute of Mathematics,
Seoul National University,
San56-1 Shinrim-dong, Kwanak-gu,
Seoul 151-747, South Korea
 (e-mail: trutnau@snu.ac.kr, tel.: +82 (0)2 880 2629; fax: +82 (0)2 887 4694.)\\ \\
{\bf Abstract:} Let $\sigma,\delta>0, b\ge0$.  Let $\lambda^2:\R^+\to \R^+$, be continuous, and locally 
of bounded variation. We develop a general analytic criterion for the pathwise uniqueness of 
\begin{eqnarray*}
R_t=R_0+\int_0^t\sigma\sqrt{|R_s|}dW_s+\int_0^t\frac{\sigma^2}{4}(\delta-bR_s)ds+ (2p-1)\ell^0_t(R-\lambda^2),
\end{eqnarray*} 
where $p\in (0,1)$, and $\ell^0_t(R-\lambda^2)$ is the symmetric semimartingale 
local time of $R-\lambda^2$. The criterion is related 
to the existence of nice (Kummer) functions for the time dependent 
infinitesimal generator of $R$. As a corollary we obtain explicit sufficient conditions for pathwise uniqueness. 
These are expressed in terms of $\lambda^2$, its derivative, and the parameters $\sigma,\delta,b,p$. \\ \\
{\bf 2000 Mathematics Subject Classification}: Primary: 60H10, 60J60, 60J55; 
Secondary: 35K20. \\  \\ 
{\bf Key words}: Pathwise uniqueness, Bessel processes, squared Bessel processes, Cox-Ingersoll-Ross processes, skew reflection, local time on a curve. 
\section{Introduction}
In this article we investigate pathwise uniqueness of squared Bessel and Cox-Ingersoll-Ross
(CIR)  processes that are perturbed by a constant multiple of a local time on a deterministic time dependent curve. These processes 
form a natural generalization of the classical squared Bessel and CIR processes. Local times on curves already appear in \cite{dav}, \cite{ei}, \cite{bb}, \cite{bcs}, in \cite{peskir} 
where an It\^o formula involving such local times is derived, 
and in \cite{RuTr4}, \cite{Tr5}. 
For another related It\^o-type formula but which is less relevant for the topic of our paper we refer to  \cite{etz}.
The tractability of  stochastic differential equations involving local times on curves (resp. on surfaces) has been facilitated by the work \cite{peskir} (resp. \cite{peskir3}) with resulting recent applications in  financial mathematics, see e.g. \cite{mij}, \cite{peskir2}. Quite different natural generalizations of squared Bessel processes without occuring local time such as squared Bessel processes and squared
Ornstein-Uhlenbeck processes with negative dimensions or negative starting points were studied in \cite{GoeYor}.
\subsection{Motivations}
Our motivations are various. 
First, we can regard the perturbation as introduction of a new parameter that provides potentially additional insight on (the singularity and the properties of) the classical original processes. 
This is indicated further below in \ref{exandunique} where we explain our pathwise uniqueness results. 
From a theoretical point of view we are interested in general pathwise uniqueness results for diffusions in dimension one whose diffusion and drift coefficients 
satisfy the Yamada-Watanabe conditions (see the original article \cite{YamWat}, or e.g. the textbook \cite[Chapter IX, \S 3]{RYor}) and which have an  additional concrete reflection term.
The here considered skew reflected processes are special but typical examples of such diffusions with non-Lipschitz diffusion coefficient and parabolic local time. 
In particular, our developed method for proving pathwise uniqueness gives a first hint how to treat the challenging general problem. 
To our knowledge it is the first time that such singular equations (degenerate non-Lipschitz diffusion coefficient plus local time on a curve), which are in close relation to boundary crossing problems and hitting times of 
deterministic curves of the original processes, are studied. The boundary crossing problem itself is classical, see e.g. \cite{khi}, \cite{ler} and references therein, \cite{dur}, and in particular \cite{GoeYor} and references therein for the case of Bessel processes where the given curve reduces to a constant line, i.e. a so-called barrier. Quite important and intensive work on pathwise uniqueness of one dimensional stochastic differential equations with non-trivial coefficients and reflection term has been done (see e.g. \cite{veret}, \cite{LeGall2}, \cite{bp}, \cite{rut}, \cite{ouk2}, \cite{mak}). However, it is remarkable that even in the simplest case when the time dependent curve reduces to a constant, 
none of these covers our considered processes with the\lq\lq standard\rq\rq coefficients in dimension one. Thus, there definitely arises a theoretical need for the investigation of such equations.
Our final motivations concern applications. Processes of the studied type and their square roots can be well applied in  financial mathematics (see e.g. \cite{dgs}, \cite{dgs2}, and below). Therefore, the tractability of such processes is an important issue.
\subsection{The SDE: detailed description and explanations}
\subsubsection{The SDE and its special cases}
We fix a continuous function  $\lambda^2:\R^+\to \R^+$ that is locally 
of bounded variation. For parameters $\sigma,\delta>0, b\ge 0$, $p\in(0,1)$, consider a solution to
\begin{eqnarray}\label{skewbesq}
R_t=R_0+\int_0^t\sigma\sqrt{|R_s|}dW_s+\int_0^t\frac{\sigma^2}{4}(\delta-bR_s)ds+ (2p-1)\ell^0_t(R-\lambda^2).
\end{eqnarray} 
where $\ell^0_t(R-\lambda^2)$ is the {\it  symmetric semimartingale 
local time} of $R-\lambda^2$ (for a reason to consider the symmetric semimartingale local time $\ell^0(R-\lambda^2)$ see Remark \ref{first}(ii)).
Due to the presence of the square root in the diffusion part, the local time in equation (\ref{skewbesq}) vanishes if $\lambda\equiv 0$. In fact, this is a well-known direct consequence of the
occupation time formula (cf. e.g. proof of Lemma \ref{timezero}(ii)).
Hence for $\lambda\equiv 0$, or $p=\frac12$ we obtain the CIR process. Starting from \cite{cir} the CIR process 
has been used intensively to model the evolution of the interest rate in a financial market (see e.g. again \cite{GoeYor} and references therein). For 
$\sigma=2$, $b=0$, and $\lambda^2\equiv 0$, or $p=\frac12$  we obtain the squared Bessel process of dimension $\delta$. 
Recently, in \cite{dgs}, the square root of (\ref{skewbesq}) for $\sigma=2, b= 0$, $p\in(0,1)$, dimension $\delta\ge 2$, and $\lambda\equiv a\in\R^{++}$, was independently of us constructed and further studied. 
In an earlier paper  \cite{RuTr4} we considered a more general equation where $\delta >0$ and $\lambda$ is a monotone function, and later in  \cite{Tr5} we generalized our results and considered weakly differentiable $\lambda$.
In \cite{dgs} many interesting properties are derived and applied to the valuation of perpetuities and pricing of weighted Asian options. 
The authors of \cite{dgs} call these processes {\it asymmetric skew Bessel processes} in order to distinguish them from the {\it skew Bessel processes} defined in \cite{bpy}. 
\subsubsection{Positivity of the SDE}
As in the classical case $p=\frac12$ a solution to (\ref{skewbesq}) 
always stays positive when started with positive initial condition.  
We show this in Lemma \ref{timezero}(ii) with the help of Tanaka's formula. 
Note that in equation (\ref{skewbesq}) we may have $2p-1<0$ and then comparison results such as e.g. the one in \cite{Yan1} only show that a solution to (\ref{skewbesq}) is smaller, hence more singular, 
than the classical CIR process. Thanks to the derived positivity one can as in the non-perturbed classical case a posteriori discard the absolute value under the square root in (\ref{skewbesq}). 
\subsubsection{Skew reflection}
The reflection term in (\ref{skewbesq}) has a prefactor $2p-1$. Diffusions involving such a term 
with $p\not= 0,\frac12,$ or $1$ are called skew reflected diffusions. If $p=0$, or $p=1$ we speak about reflected diffusions. In contrast to reflected diffusions, skew reflected diffusions 
behave more like ordinary diffusions (i.e. $p=\frac12$) in the sense that the nature of the state space is not influenced. Reflected diffusions split the state space in two separate parts from which one can not be left if it is once entered. Skew reflected diffusions were studied by many authors (see \cite{lejay} and 
references therein) starting from \cite{itomckean}, and \cite{walsh}, and
provide typical examples of diffusions with discontinuous local times (see Lemma \ref{discont}, and \cite{walsh}).
For nice coefficients the skew reflection has been described rigorously in terms of excursion theory, such 
as e.g. for the skew Brownian motion (cf. \cite{walsh}, \cite{hs}, \cite{lejay}). 
Heuristically, one has a reflection downwards 
with \lq\lq tendency\rq\rq\ $1-p$ and upwards with  \lq\lq tendency\rq\rq\ $p$ where $2p-1=p-(1-p)$. 
For various applications of skew Brownian motion and other skew reflected 
processes see \cite{lejay} and references therein. Besides it was noted in \cite{Wein2} that stochastic differential equations with prefactored local time may be used to 
model diffusion processes in a medium with permeable barrier. For more on skew reflection and permeable barriers we also refer to \cite{port} and \cite{aryport2}.
\subsubsection{The singularity of the SDE}
In addition to the singular diffusion coefficient, the specialty of  (\ref{skewbesq}) is that the reflection takes place whenever $R$ meets the  given time dependent curve $\lambda^2$. 
Moreover, equation (\ref{skewbesq}) can be transformed (see \cite[p. 382]{Tr5}) into a degenerate, non-Lipschitz, and time-dependent equation with reflection term in dimension one. 
For this type of equations general uniqueness and weak existence results are unknown and could possibly even not hold. 
\subsection{Existence and uniqueness results}\label{exandunique}
Weak existence of a solution to  (\ref{skewbesq}) as well as for its square root has been established in \cite{Tr5} in various cases.  
It is worth to remark that we were not able to construct in whole generality a solution in the extreme cases $p=0$ and $p=1$, and for other than increasing (though nonetheless for constant) 
$\lambda^2$ if $-1<2p-1<0$. 
However, if $-1<2p-1<0$ pathwise uniqueness can hold even if $\lambda^2$ strictly decreases on some interval. 
If $|2p-1|>1$, then at least in case $\lambda^2$ is absolutely continuous there is no solution to (\ref{skewbesq}) (cf. Remark \ref{nosolution+}(ii)).\\
In this note we develop an analytic criterion for pathwise uniqueness of  
(\ref{skewbesq}). This is done in Theorem \ref{pathunique2}. 
Uniqueness is 
reduced to the resolution of a parabolic differential equation corresponding 
to the infinitesimal generator of $R$ (see (\ref{out}), and also (\ref{out2})).
The general criterion of Theorem \ref{pathunique2} is directly 
applied in Corollary \ref{pathunique0} in order to show that
pathwise uniqueness holds for 
(\ref{skewbesq}), whenever 
$$
d\lambda^2(s)\le \frac{\sigma^2 \delta}{4}ds, \ \ \mbox{ if } \ p\in \left (\frac{1}{2},1\right ),
$$
or, whenever
$$
d\lambda^2(s)\ge \frac{\sigma^2}{4}\left (\delta-b \lambda^2(s)\right )ds,\ \ \mbox{ if }\ p\in \left (0,\frac{1}{2}\right ).
$$
The inequalities are to be understood in the sense of signed measures on $\R^+$. For instance, if $0<2p-1<1$ (i.e. $\frac12< p<1 $), $\sigma=2$, this 
means that the increasing part of 
$\lambda^2$ is Lipschitz continuous with Bessel dimension $\delta$ as Lipschitz constant, and that 
the decreasing  part is arbitrary. If $-1<2p-1<0$ (i.e. $0<p<\frac12$), and $\lambda^2$ is absolutely continuous we obtain 
\begin{eqnarray}\label{2}
(\lambda^2)'\ge \frac{\sigma^2}{4}(\delta-b\lambda^2)
 \end{eqnarray}
as a sufficient condition for pathwise uniqueness.
According to (\ref{2}), if $\lambda^2$ decreases on some interval it must be  above the mean-reverting level $\frac{\delta}{b}$ of the CIR process on that interval. 
Both results in case of $\frac12<p<1$, as well as in case $0<p<\frac12$, 
appear to be plausible to us in the following sense: in the first case, if $\lambda^2(t)$ grows faster than the largest possible drift $\frac{\sigma^2 \delta}{4}t$ of $R_t$, 
then there could 
be no difference between the reflected and the non-reflected equation, hence more than one solution; and if  $0<p<\frac12$ we argue that we can only add a 
negative singular drift to the classical process and keep pathwise uniqueness when the corresponding classical process 
also has a negative component of drift. The squared Bessel process only has a positive component of drift for all dimensions, but the CIR process also has a negative 
component of drift when it is above the mean-reverting level. Therefore, one might wonder whether these pathwise uniqueness results can be substantially improved or not. 
We leave this as an open question. However, in section \ref{three} we provide some tools to develop additional existence and uniqueness results. So one could start from there. 
In order to obtain our pathwise uniqueness result we made use of nice Kummer functions of the first kind (see Corollary \ref{pathunique0}). 
Even after \rq\rq localizing\lq\lq\ the main argument, we were not able to get any uniqueness result by using Kummer functions of the second kind (see however the proof of Corollary 
\ref{nosolution}(ii)). Kummer functions of the second kind in general do not behave well at the origin.
\subsubsection{The method for proving pathwise uniqueness}
Looking at the difference of $|R^{(1)}_t-R^{(2)}_t|$, 
where $R^{(1)},R^{(2)},$ are two solutions, we can not use {\it Le Gall's local time method} (see \cite{LeGall}),  
since although $\ell^0_t(R^{(1)}-R^{(2)})\equiv 0$, there always remains a term involving the 
local time on $\lambda^2$. The coefficients, as well as the parabolic situation, 
make transformations through harmonic functions as e.g. used in \cite{hs}, \cite{LeGall2}, \cite{rut}, impossible. 
In other words there seems to be no nice transformation of (\ref{skewbesq}) in an equation for which  pathwise uniqueness would be known. 
Therefore we have to find another method. Our line of arguments, is to first show that together with $R^{(1)},R^{(2)},$ the supremum 
$S=R^{(1)}\vee R^{(2)}$, and the infimum $I=R^{(1)}\wedge R^{(2)}$,  
is also a solution. This is a standard procedure. We refer to Remark \ref{first}(i) for a consequence of it. 
Then we have to find a nice function $H(t,x)$ (see Remark \ref{procedure}), 
stricly increasing in $x$, 
such that $\overline{g}(x-\lambda^2(t))^{-1}H(t,x)$ solves (\ref{out}), and to apply 
a generalized Gronwall inequality to the expectation of $H(t,S_t)-H(t,I_t)$ 
in order to conclude (see proof of Theorem \ref{pathunique2}, and Corollary \ref{pathunique0}). This is our contribution and it appears at least to us to be an efficient   
method for skew reflected equations with singular diffusion coefficient. 
Since sub/superharmonic functions w.r.t. the time homogeneous infinitesimal generator of the CIR process may lose of their advantageous 
properties under parabolic boundary conditions we cannot expect a positive answer for every curve, even not for constant ones.  
We emphasize that our technique can also be used for (\ref{skewbesq}) with more general drift coefficients. 
However, we feel that the details should be investigated elsewhere.\\   
In order to find that $S=R^{(1)}\vee R^{(2)}$, $I=R^{(1)}\wedge R^{(2)}$, are also solutions 
we use formulas on the representation of local times of the supremum and infimum of two semimartingales (see \cite{Wein}, \cite{oukrut}).
In order to make disappear the 
local time on $\lambda^2$ with the help of a nice function $H$, we use special  
It\^o-Tanaka formulas (see Lemma \ref{itotanaka}), which are proved using representation formulas for local times from \cite{oukrut} (see Lemma \ref{replocal2}). 
The derivation of these formulas takes in particular advantage of the fact that the time dependency is put into a  
semimartingale structure. Lemma \ref{itotanaka} is simple but useful, and allows $\lambda^2$ just to be locally of bounded variation.
It is also used in Corollary \ref{nosolution}(ii).
\subsubsection{Martingale problem and an additional pathwise uniqueness criterion}
In the third section we solve the martingale problem related to $R$ on a 
nice class of test functions (see Proposition \ref{mart}, and Remark \ref{alg} for its usefulness). 
We also add another pathwise uniqueness criterion in Theorem \ref{pathunique} which 
uses \lq\lq true\rq\rq\  time dependent functions and which is developed with the help of a recent extension of It\^o's formula from \cite{peskir}. \\

\section{Pathwise uniqueness in the non absolutely continuous case}\label{two}
Throughout this article $\mathbb{I}_A$ will denote the indicator function of a set $A$. 
We let $\R^+:=\{x\in \R|\,x\ge 0\}$. 
An element of $\R^+\times\R$ 
is typically represented as $(t,x)$, i.e. the first entry is always 
for time, the second always for space. The time derivative is denoted by $\partial_t$, 
the space 
derivative by $\partial_x$, and the  second space 
derivative by $\partial_{xx}$. Functions depending on space and time are 
denoted with capital letters, functions depending only on one variable are denoted with 
small case letters. If a function $f$ only depends on one variable we write $f'$, resp. $f''$, 
for its derivative, resp. second derivative.\\ \\
Let $\sigma,\delta>0$, $b\ge 0$, and $\lambda^2:\R^+\to\R^+$  
be continuous and locally of bounded variation. 
On an arbitrary complete filtered probability space 
$(\Omega, \F, (\F_t)_{t\ge 0}, P)$, consider an 
adapted continuous process with the following properties: 
$R$ solves the integral equation (\ref{skewbesq}) $P$-a.s, 
where
\begin{itemize}
\item[(i)] $(W_t)_{t\ge 0}$ is a $\F_{t}$-Brownian motion starting from zero,
\item[(ii)] $P[\int_0^t\{\sigma^2|R_s|+|\frac{\sigma^2}{4}(\delta-bR_s)|\}ds<\infty]=1$,
\item[(iii)] $\ell^0(R-\lambda^2)$ is the symmetric semimartingale local time of $R-\lambda^2$, i.e.
\begin{eqnarray}\label{symtanaka}
\frac12\ell^0_t(R-\lambda^2) & = & (R_t-\lambda^2(t))^+-(R_0-\lambda^2(0))^+\nonumber\\ 
&&-\int_0^t\frac{\mathbb{I}_{\{R_s-\lambda^2(s)>0\}}+
\mathbb{I}_{\{R_s-\lambda^2(s)\ge 0\}}}{2}d\{R_s-\lambda^2(s)\},\ \ t\ge 0.
\end{eqnarray} 
\end{itemize}
A process $R$ with the given properties is called a {\it (weak) solution} 
to (\ref{skewbesq}). In particular, one can show exactly as in \cite[\mbox{VI. (1.3) Proposition}]{RYor} 
that 
\begin{eqnarray}\label{support}
\int_0^t H(s,R_s) d\ell^{0}_s(R-\lambda^2) = \int_0^t H(s,\lambda^2(s))d\ell^{0}_s(R-\lambda^2),
\end{eqnarray}
for any positive Borel function $H$ on $\R^+\times \R$.\\ \\
We say that {\it pathwise uniqueness} holds for (\ref{skewbesq}), if, any two solutions 
$R^{(1)},R^{(2)}$, on the same filtered probability space $(\Omega, \F, P)$, with $R^{(1)}_0=R^{(2)}_0$ $P$-a.s., and with same Brownian motion, are $P$-indistinguishable, i.e. $P[R^{(1)}_t=R^{(2)}_t]=1$ for all $t\ge 0$.\\ \\
For later purposes we introduce the upper (or right) local time of $R-\lambda^2$
\begin{eqnarray}\label{tanaka}
\ell^{0+}_t(R-\lambda^2)=(R_t-\lambda^2(t))^+-(R_0-\lambda^2(0))^+-
\int_0^t\mathbb{I}_{\{R_s-\lambda^2(s)>0\}}d\{R_s-\lambda^2(s)\},
\end{eqnarray}
and the lower (or left) local time $\ell^{0-}(R-\lambda^2)$, which can be extracted from the following formula for the symmetric local time
\begin{eqnarray}\label{sym}
\ell^{0}(R-\lambda^2)=\frac{\ell^{0^+}(R-\lambda^2)+\ell^{0^-}(R-\lambda^2)}{2}.
\end{eqnarray} \\
Accordingly, $\ell^{0}(X), \ell^{0+}(X), \ell^{0-}(X)$, are defined for any continuous semimartingale $X$. 
Another useful formula, is the {\it occupation times formula}: If $X$ is a continuous semimartingale, then
\begin{eqnarray}\label{occtim}
\int_0^t H(s,X_s)d\langle X,X \rangle_s=\int_{\R}\int_0^t H(s,a)d\ell^{a+}_s(X)da \\ \nonumber
\end{eqnarray}
holds a.s. for every positive Borel function $H$ on $\R^+\times \R$, see e.g. \cite[\mbox{IV. (45.4)}]{rw}. 
By \cite[\mbox{VI. (1.7) Theorem}]{RYor} we may and will assume that the family of local times $(\ell^{a+}_t(X))_{(t,a)\in\R^+\times \R}$ 
is right-continuous, 
i.e. $(t,a)\mapsto \ell_t^{a+}(X)$ is a.s. continuous in $t$ and c\`adl\`ag in $a$.
Since $\ell^{a+}(X)$ has only countably many jumps in $a$, the formula holds for $\ell^{a}(X)$, and $\ell^{a-}(X)$, 
as well. \\ \\
The statements of the following lemma are direct consequences of well-known formulas. 
\begin{lem}\label{timezero} 
Let $R$ be a weak solution 
to (\ref{skewbesq}). Then:
\begin{itemize}
\item[(i)] $\int_0^t\mathbb{I}_{\{\lambda^2(s)= 0\}}d\ell^{0}_s(R-\lambda^2)=0$ $P$-a.s. for any $t\ge 0$. 
In particular 
$$
supp\{d\ell^{0}_s(R-\lambda^2)\}\subset \overline{\{\lambda^2(s)>0\}}.
$$
\item[(ii)] If $R_0\ge 0$ $P$-a.s., then $R_t\ge 0$  $P$-a.s. for any $t\ge 0$.\\
\item[(iii)] The time of $R$ spent at zero has Lebesgue measure zero, 
i.e. 
$$
\int_0^t\mathbb{I}_{\{R_s= 0\}}ds=0\ \ P\mbox{-a.s.} \ \ \forall t\ge 0. 
$$
\item[(iv)] The time of $R$ spent on $\lambda^2$ has Lebesgue measure zero, i.e. 
$$
\int_0^t\mathbb{I}_{\{R_s= \lambda^2(s)\}}ds=0\ \ P\mbox{-a.s.} \ \ \forall t\ge 0. 
$$
\end{itemize}
\end{lem}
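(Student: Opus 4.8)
The plan is to derive all four statements from the three standard tools already recalled in the excerpt: the symmetric Tanaka formula \eqref{symtanaka}, the occupation times formula \eqref{occtim}, and the support property \eqref{support}. I would treat (i) first, then (ii), and finally (iii) and (iv) together, since (iv) will need (iii) and (ii).

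For (i): apply the support property \eqref{support} with the positive Borel function $H(s,x)=\mathbb{I}_{\{\lambda^2(s)=0\}}$ (which does not depend on $x$). The left-hand side of \eqref{support} is exactly $\int_0^t \mathbb{I}_{\{\lambda^2(s)=0\}}\,d\ell^0_s(R-\lambda^2)$, while the right-hand side becomes $\int_0^t \mathbb{I}_{\{\lambda^2(s)=0\}}\mathbb{I}_{\{\lambda^2(s)=0\}}\,d\ell^0_s(R-\lambda^2)$ — but the cleaner route is to note that $d\ell^0(R-\lambda^2)$ is a.s.\ carried by $\{s : R_s=\lambda^2(s)\}$, so on the event $\{\lambda^2(s)=0\}$ the integrator is carried by $\{R_s=\lambda^2(s)=0\}$; then use that $R$ spends zero Lebesgue time at $0$ — wait, that is (iii). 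The more self-contained argument: by \eqref{support}, $\int_0^t \mathbb{I}_{\{\lambda^2(s)=0\}}\,d\ell^0_s(R-\lambda^2)=\int_0^t \mathbb{I}_{\{\lambda^2(s)=0\}}\,d\ell^0_s(R-\lambda^2)$ is a tautology, so instead I compare $\ell^0(R-\lambda^2)$ to the occupation of $R-\lambda^2$ near $0$ via \eqref{occtim}; since $d\langle R,R\rangle_s = \sigma^2|R_s|\,ds$, on $\{R_s=\lambda^2(s)=0\}$ the quadratic variation density vanishes, forcing the local time at level $0$ to not charge that set. The support statement $supp\{d\ell^0_s(R-\lambda^2)\}\subset\overline{\{\lambda^2(s)>0\}}$ then follows because the complement of $\overline{\{\lambda^2(s)>0\}}$ is open and contained in $\{\lambda^2(s)=0\}$.

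For (ii): this is the Tanaka-formula argument the introduction already advertises. Write Tanaka's formula for $(R_t)^-=(-R_t)^+$ (the negative part), or equivalently apply the symmetric Tanaka formula to $R$ at level $0$. One gets $(R_t)^- = (R_0)^- - \int_0^t \mathbb{I}_{\{R_s<0\}}\,dR_s + \tfrac12\ell^0_t(R)$ plus the jump correction for the symmetric version; since $R_0\ge 0$ the first term vanishes. Substituting $dR_s = \sigma\sqrt{|R_s|}\,dW_s + \tfrac{\sigma^2}{4}(\delta-bR_s)\,ds + (2p-1)\,d\ell^0_s(R-\lambda^2)$ and observing that on $\{R_s<0\}$ we have $\lambda^2(s)\ge 0 > R_s$, so $R_s-\lambda^2(s)<0$ and the local-time term $\mathbb{I}_{\{R_s<0\}}\,d\ell^0_s(R-\lambda^2)=0$; also on $\{R_s<0\}$ the drift $\tfrac{\sigma^2}{4}(\delta - bR_s)\ge \tfrac{\sigma^2\delta}{4}>0$, contributing a nonnegative quantity to $(R_t)^-$ with a minus sign. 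Taking expectations, the stochastic integral is a martingale (or use localization), and one is left with $E[(R_t)^-] \le -\text{(nonnegative)} + \tfrac12 E[\ell^0_t(R)] - \ldots$; the cleanest closure is: $(R_t)^- \le -\int_0^t\mathbb{I}_{\{R_s<0\}}\tfrac{\sigma^2}{4}(\delta-bR_s)\,ds + \tfrac12\ell^0_t(R) - \tfrac12\ell^0_t(R)$-type cancellation using that $\ell^0(R)$ is carried by $\{R=0\}$ and $\mathbb{I}_{\{R_s<0\}}$ kills it, giving $(R_t)^- \le -\int_0^t\mathbb{I}_{\{R_s<0\}}\tfrac{\sigma^2}{4}(\delta-bR_s)\,ds \le 0$, hence $(R_t)^-=0$ a.s. The nonnegativity of $(R_t)^-$ then forces $R_t\ge 0$ a.s.

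For (iii): once $R_t\ge 0$ we can drop the absolute value, $d\langle R,R\rangle_s=\sigma^2 R_s\,ds$. Apply the occupation times formula \eqref{occtim} with $H(s,a)=\mathbb{I}_{\{a=0\}}/a$... no — the standard trick: $\int_0^t \mathbb{I}_{\{R_s=0\}}\,d\langle R,R\rangle_s = \sigma^2\int_0^t \mathbb{I}_{\{R_s=0\}}R_s\,ds = 0$ trivially; instead one shows $\int_0^t \mathbb{I}_{\{R_s=0\}}\,ds = \tfrac{1}{\sigma^2}\int_0^t \mathbb{I}_{\{R_s=0\}}\tfrac{1}{R_s}\,d\langle R,R\rangle_s$, which by \eqref{occtim} equals $\tfrac{1}{\sigma^2}\int_{\R}\tfrac{\mathbb{I}_{\{a=0\}}}{a}\ell^{a}_t(R)\,da$; the integrand is supported on the single point $a=0$ where it is (formally) $\ell^0_t(R)/0$, so a direct division is illegitimate — the honest route is: $\int_0^t\mathbb{I}_{\{R_s=0\}}R_s^{-1}\mathbb{I}_{\{R_s>\varepsilon... \}}$ is empty. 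The genuinely clean argument: by \eqref{occtim}, $\int_0^t\mathbb{I}_{\{R_s\le\varepsilon\}}\,d\langle R,R\rangle_s = \int_0^\varepsilon \ell^a_t(R)\,da$; but also this $=\sigma^2\int_0^t\mathbb{I}_{\{R_s\le\varepsilon\}}R_s\,ds$. Let $\varepsilon\downarrow 0$: the right side tends to $\sigma^2\int_0^t\mathbb{I}_{\{R_s=0\}}\cdot 0\,ds=0$, giving $\ell^{0+}_t(R)=\lim_{\varepsilon\downarrow0}\tfrac1\varepsilon\int_0^\varepsilon\ell^a_t(R)\,da$... this shows $\ell^0_t(R)$ relates to the density. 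The standard conclusion (as in the proof for squared Bessel) is that $\int_0^t\mathbb{I}_{\{R_s=0\}}\,ds=0$ follows from the occupation formula because $\{a\mapsto \ell^a_t(R)\}$ has no atom contributing mass when divided by the continuous density $\sigma^2 a$ vanishing at $0$; I will cite this as the well-known argument for squared Bessel/CIR and adapt it, noting $\int_0^t \mathbb{I}_{\{R_s=0\}}\,ds$ has, via \eqref{occtim} applied to $H(s,a) = g_n(a)\to \mathbb{I}_{\{0\}}$ with $g_n(a)=(1-na)^+$, the bound $\le \sigma^{-2}\int_0^{1/n}\tfrac{1}{a}\ell^a_t(R)g_n(a)\cdot$... — in any case the conclusion is $\int_0^t\mathbb{I}_{\{R_s=0\}}\,ds=0$ a.s.

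For (iv): similarly, since $R$ spends zero time at its boundary $0$ (by (iii)) and is nonnegative (by (ii)), consider $X_s := R_s - \lambda^2(s)$, a continuous semimartingale with $d\langle X,X\rangle_s = d\langle R,R\rangle_s = \sigma^2 R_s\,ds$. Apply the occupation times formula \eqref{occtim} to $X$ at level $0$: $\int_0^t\mathbb{I}_{\{X_s=0\}}\,d\langle X,X\rangle_s$ relates to $\ell^0_t(X)\cdot da$ at $a=0$. On $\{X_s=0\}=\{R_s=\lambda^2(s)\}$, split into $\{R_s=\lambda^2(s)=0\}$, which has zero Lebesgue time by (iii), and $\{R_s=\lambda^2(s)>0\}$, where $d\langle X,X\rangle_s = \sigma^2 R_s\,ds$ with $R_s>0$. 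Then the same squared-Bessel-style argument with the occupation formula — dividing by the density $\sigma^2 R_s$ which is strictly positive on that set — yields $\int_0^t\mathbb{I}_{\{R_s=\lambda^2(s)>0\}}\,ds = 0$ a.s. Combining, $\int_0^t\mathbb{I}_{\{R_s=\lambda^2(s)\}}\,ds=0$ a.s. for all $t$.

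I expect the main obstacle to be a rigorous version of the occupation-formula division argument in (iii) and (iv): formally one wants to write $ds = (\sigma^2 R_s)^{-1}d\langle R,R\rangle_s$ on $\{R_s>0\}$ and pass $\varepsilon\downarrow0$ to capture $\{R_s=0\}$, and one must control that the local-time density $a\mapsto\ell^a_t(R)$ does not blow up fast enough near $a=0$ to spoil $\int_0^\varepsilon a^{-1}\ell^a_t(R)\,da\to 0$; the clean fix is to not divide at all but to observe directly that $\int_0^t\mathbb{I}_{\{R_s=0\}}\,ds$ is dominated, for each $n$, by $\int_0^t \mathbb{I}_{\{R_s\le 1/n\}}\,ds$, and to bound $\int_0^t\mathbb{I}_{\{R_s\le 1/n\}}\,ds = \sigma^{-2}\int_0^t\mathbb{I}_{\{R_s\le 1/n\}}R_s^{-1}\,d\langle R,R\rangle_s$ only on the set where $R_s$ is bounded below — which it is not near $0$ — so instead I will use the sharper route via Tanaka's formula for $(R_t-\varepsilon)^+$ or simply invoke the classical fact (e.g.\ from the theory of CIR/squared Bessel, cf.\ \cite{RYor}) that the occupation time of a level where the diffusion coefficient vanishes is Lebesgue-null, adapting the one-line citation. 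Everything else is routine bookkeeping with the jump terms in the symmetric local time, which vanish upon taking expectations or are killed by the indicator supports.
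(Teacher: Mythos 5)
There is a genuine gap, and it is concentrated in part (iii). The occupation times formula applied directly at the level $0$ of $R$ only yields $\int_0^t\mathbb{I}_{\{R_s=0\}}\,d\langle R\rangle_s=\sigma^2\int_0^t\mathbb{I}_{\{R_s=0\}}|R_s|\,ds=0$, which is vacuous — you notice this yourself, and then fall back on citing "the classical fact that the occupation time of a level where the diffusion coefficient vanishes is Lebesgue-null." That fact is false in the stated generality: the squared Bessel process of dimension $0$ started at $0$ is identically zero and spends all its time at the degenerate level. Part (iii) genuinely uses $\delta>0$, i.e.\ the strictly positive drift $\frac{\sigma^2\delta}{4}$ at the origin, and your proposal never brings the drift into play. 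The paper's mechanism is: (a) both one-sided local times $\ell^{0\pm}_t(R)$ vanish, by the occupation-formula argument with the non-integrable weight $\mathbb{I}_{\{a\neq0\}}/|a|$ (since $d\langle R\rangle_s=\sigma^2|R_s|\,ds$ degenerates linearly at the level $0$, the occupation density integrated against $1/|a|$ near $0$ stays bounded by $\sigma^2 t$, forcing $\ell^{0+}_t(R)=\ell^{0-}_t(R)=0$); (b) by \cite[VI.(1.7)]{RYor}, $\ell^{0+}_t(R)-\ell^{0-}_t(R)=\int_0^t\mathbb{I}_{\{R_s=0\}}\,dA_s$ where $A$ is the finite-variation part of $R$; (c) the local-time part of $dA$ does not charge $\{R_s=0\}$ by (i) and (\ref{support}), leaving $0=\frac{\sigma^2\delta}{4}\int_0^t\mathbb{I}_{\{R_s=0\}}\,ds$. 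None of your attempted routes (dividing by the density, $\varepsilon$-approximations, mollified indicators) can succeed without some such input from the drift, so this is a missing idea rather than a missing detail.

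Two secondary issues. In (ii), the term $\tfrac12\ell^{0}_t(R)$ produced by Tanaka's formula for $R^-$ is an additive term, not one sitting under the indicator $\mathbb{I}_{\{R_s<0\}}$, so your proposed "cancellation using that $\ell^0(R)$ is carried by $\{R=0\}$ and $\mathbb{I}_{\{R_s<0\}}$ kills it" does not work as written; what is actually needed is $\ell^{0\pm}_t(R)\equiv0$, which is exactly fact (a) above and which the paper establishes before applying Tanaka. (Your treatment of the term $\mathbb{I}_{\{R_s<0\}}\,d\ell^0_s(R-\lambda^2)$ via the support property and $\lambda^2\ge0$ is fine, and your handling of the drift sign is correct.) In (i), your intuition — that on $\{\lambda^2(s)=0\}$ the quadratic variation of $R-\lambda^2$ degenerates like the level itself — is the right one, but the statement "the quadratic variation density vanishes on $\{R_s=\lambda^2(s)=0\}$, forcing the local time not to charge that set" is not a proof (the density of any semimartingale's occupation measure is irrelevant \emph{at} the level; what matters is its behaviour \emph{near} the level). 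The missing step is the same $1/|a|$-weighting: on $\{\lambda^2(s)=0\}$ one has $\sigma^2|R_s|/|R_s-\lambda^2(s)|=\sigma^2$, so $\int_{\R}\frac{\mathbb{I}_{\{a\neq0\}}}{|a|}\int_0^t\mathbb{I}_{\{\lambda^2(s)=0\}}\,d\ell^{a+}_s(R-\lambda^2)\,da\le\sigma^2 t$, and non-integrability of $1/|a|$ together with the regularity of $a\mapsto\ell^{a\pm}_t$ gives the claim. Part (iv) as you set it up (splitting off $\{R_s=\lambda^2(s)=0\}$, handled by (iii), and using strict positivity of the occupation density on $\{R_s=\lambda^2(s)>0\}$) coincides with the paper's argument and is sound once (iii) is repaired.
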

\begin{proof}
(i) By (\ref{occtim}) we have
\begin{eqnarray*}
\int_{\R}\frac{\mathbb{I}_{\{a\not=0\}}}{|a|}
\int_0^t\mathbb{I}_{\{\lambda^2(s)=0\}}d\ell^{a+}_s(R-\lambda^2)da 
&=&\int_0^t\frac{\mathbb{I}_{\{R_s-\lambda^2(s)\not=0\}}}{|R_s-\lambda^2(s)|}
\mathbb{I}_{\{\lambda^2(s)=0\}}\sigma^2|R_s|ds\le \sigma^2 t.\\
\end{eqnarray*}
Since $\frac{1}{|a|}$ is not integrable in any neighborhood of zero, we obtain that 
$$
\int_0^t\mathbb{I}_{\{\lambda^2(s)=0\}}d\ell^{0+}_s(R-\lambda^2)=
\int_0^t\mathbb{I}_{\{\lambda^2(s)=0\}}d\ell^{0-}_s(R-\lambda^2)=0.
$$
The statement
thus holds for $\ell^{0+}(R-\lambda^2)$, and $\ell^{0-}(R-\lambda^2)$, and therefore 
also for $\ell^{0}(R-\lambda^2)$. \\
(ii) As a direct consequence of the occupation time formula 
$\ell_t^{0+}(R)\equiv 0$ (replace $\lambda^2$ by zero in the proof of (i)). 
Then, applying Tanaka's formula 
(cf. e.g. \cite[\mbox{VI. (1.2) Theorem}]{RYor}), using (i) and (\ref{support}),  
taking expectations, and stopping with $\tau_n:=\inf\{t\ge 0||R_t|\ge n\}$, we obtain
\begin{eqnarray*}
E[R_{t\wedge \tau_n}^-] &=&E[R_0^-] - E[\int_0^{t\wedge \tau_n}\mathbb{I}_{\{R_s\le 0\}}\frac{\sigma^2}{4}(\delta-bR_s)ds]\\
&& -(2p-1)E[\int_0^{t\wedge \tau_n}\mathbb{I}_{\{R_s\le 0\}}\mathbb{I}_{\{\lambda^2(s)>0\}}
d\ell^0_s(R-\lambda^2)]\\
&\le &-E[\int_0^{t\wedge \tau_n}\mathbb{I}_{\{R_s\le 0\}}\frac{\sigma^2}{4}(\delta-bR_s )ds]\le 0.
\end{eqnarray*}
It follows that $R_{t\wedge \tau_n}$ is $P$-a.s. equal to its positive part $R_{t\wedge \tau_n}^+$. Letting $n\to \infty$ concludes the proof.\\
(iii) Due to the presence of the square root in the diffusion part, we have 
$\ell_t^{0+}(R),\ell_t^{0-}(R)\equiv 0$ (replace $\lambda^2$ by zero in the proof of (i)). 
Using \cite[\mbox{VI. (1.7) Theorem}]{RYor}, (i) and (\ref{support}), 
it follows  $P$-a.s.
\begin{eqnarray*}
0\ =\ \ell_t^{0+}(R)-\ell_t^{0-}(R)
& = & \int_0^t\mathbb{I}_{\{R_s= 0\}}\left\{\frac{\sigma^2}{4}(\delta-bR_s)ds+
(2p-1)d\ell_s(R-\lambda^2))\right \}\\
& = & \frac{\sigma^2 \delta}{4}\int_0^t\mathbb{I}_{\{R_s= 0\}}ds.
\end{eqnarray*}
(iv) As a simple consequence of the occupation time formula, we have
\begin{eqnarray*}
\int_0^t\mathbb{I}_{\{R_s= \lambda^2(s)\}}\mathbb{I}_{\{ R_s\not=0\}}\sigma^2 |R_s|ds & = & 
\int_0^t \mathbb{I}_{\{R_s-\lambda^2(s)= 0\}}d\langle R-\lambda^2\rangle_s\\
&= & \int_{\R}\mathbb{I}_{\{0\}}(a)\ell_t^a(R-\lambda^2)da =0.\\
\end{eqnarray*}
But $P$-a.s. $\sigma^2|R_s|\mathbb{I}_{\{R_s\not= 0\}}>0$ $ds$-a.e. by (iii) and the assertion follows.
\end{proof}
\centerline{}
From the next lemma one observes at least when $\lambda^2$ is absolutely continuous the discontinuity of the 
local times in the space variable at zero.\\
\begin{lem}\label{discont}
Let $R$ be a weak solution 
to (\ref{skewbesq}). 
We have $P$-a.s.:
\begin{eqnarray*}
\ell^{0+}_t(R-\lambda^2)=2p\ell^{0}_t(R-\lambda^2)-
\int_0^t\mathbb{I}_{\{R_s= \lambda^2(s)\}}d\lambda^2(s),
\end{eqnarray*}
and
\begin{eqnarray*}
\ell^{0-}_t(R-\lambda^2)=2(1-p) \ell^{0}_t(R-\lambda^2)+
\int_0^t\mathbb{I}_{\{R_s= \lambda^2(s)\}}d\lambda^2(s).
\end{eqnarray*}
If $\lambda^2$ is absolutely continuous, i.e. $\lambda^2\in H^{1,1}_{loc}(\R^+)$, with  
$d\lambda^2(s)=(\lambda^2)'(s)ds$, then $P$-a.s.
\begin{eqnarray*}
\ell^0_t(R-\lambda^2)=\frac{1}{2p}\ell^{0+}_t(R-\lambda^2)=\frac{1}{2(1-p)} \ell^{0-}_t(R-\lambda^2).
\end{eqnarray*}
\end{lem}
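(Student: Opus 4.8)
Write $X:=R-\lambda^2$, a continuous semimartingale, and note that its bracket is $d\langle X\rangle_s=d\langle R\rangle_s=\sigma^2|R_s|\,ds$ since the locally bounded-variation function $\lambda^2$ does not contribute. The plan is to reduce all the displayed identities to a single computation of $\int_0^t\mathbb{I}_{\{X_s=0\}}\,dX_s$. First, subtracting the symmetric Tanaka formula (\ref{symtanaka}) from the one-sided formula (\ref{tanaka}) gives $\ell^{0+}_t(X)=\ell^0_t(X)+\int_0^t\mathbb{I}_{\{X_s=0\}}\,dX_s$ (the two integrands against $dX_s$ differ only on $\{X_s=0\}$), after which (\ref{sym}) yields $\ell^{0-}_t(X)=2\ell^0_t(X)-\ell^{0+}_t(X)$; so it suffices to identify $\int_0^t\mathbb{I}_{\{X_s=0\}}\,dX_s$.

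Next I would split $dX_s=dR_s-d\lambda^2(s)$ and insert (\ref{skewbesq}):
\begin{eqnarray*}
\int_0^t\mathbb{I}_{\{X_s=0\}}\,dX_s
&=&\int_0^t\mathbb{I}_{\{R_s=\lambda^2(s)\}}\,\sigma\sqrt{|R_s|}\,dW_s
+\frac{\sigma^2}{4}\int_0^t\mathbb{I}_{\{R_s=\lambda^2(s)\}}(\delta-bR_s)\,ds\\
&&+\,(2p-1)\int_0^t\mathbb{I}_{\{R_s=\lambda^2(s)\}}\,d\ell^0_s(R-\lambda^2)
-\int_0^t\mathbb{I}_{\{R_s=\lambda^2(s)\}}\,d\lambda^2(s).
\end{eqnarray*}
The first term is a continuous local martingale whose bracket $\sigma^2\int_0^t\mathbb{I}_{\{R_s=\lambda^2(s)\}}|R_s|\,ds$ vanishes $P$-a.s. by Lemma \ref{timezero}(iv) (the set $\{s\le t:R_s=\lambda^2(s)\}$ is Lebesgue-null and $s\mapsto|R_s|$ is bounded on $[0,t]$), hence it is a.s. identically zero; the second term is dominated in absolute value by a constant times $\int_0^t\mathbb{I}_{\{R_s=\lambda^2(s)\}}\,ds=0$, again by Lemma \ref{timezero}(iv); and $\int_0^t\mathbb{I}_{\{R_s=\lambda^2(s)\}}\,d\ell^0_s(R-\lambda^2)=\ell^0_t(R-\lambda^2)$ because $d\ell^0(R-\lambda^2)$ is carried by $\{s:R_s=\lambda^2(s)\}$, which follows from (\ref{support}) applied to the positive Borel function $H(s,x)=\mathbb{I}_{\{x\neq\lambda^2(s)\}}$ (forcing $\int_0^t\mathbb{I}_{\{R_s\neq\lambda^2(s)\}}\,d\ell^0_s(R-\lambda^2)=0$). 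Hence $\int_0^t\mathbb{I}_{\{X_s=0\}}\,dX_s=(2p-1)\ell^0_t(R-\lambda^2)-\int_0^t\mathbb{I}_{\{R_s=\lambda^2(s)\}}\,d\lambda^2(s)$, and substituting into the identity from the first paragraph gives $\ell^{0+}_t(R-\lambda^2)=2p\,\ell^0_t(R-\lambda^2)-\int_0^t\mathbb{I}_{\{R_s=\lambda^2(s)\}}\,d\lambda^2(s)$; the expression for $\ell^{0-}$ then drops out of (\ref{sym}).

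For the last assertion, $\lambda^2\in H^{1,1}_{loc}(\R^+)$ means $d\lambda^2(s)=(\lambda^2)'(s)\,ds$ with $(\lambda^2)'\in L^1_{loc}(\R^+)$, so $\int_0^t\mathbb{I}_{\{R_s=\lambda^2(s)\}}(\lambda^2)'(s)\,ds=0$ $P$-a.s. because the integration runs over the Lebesgue-null set $\{s\le t:R_s=\lambda^2(s)\}$ (Lemma \ref{timezero}(iv)); the two previous identities then collapse to $\ell^{0+}_t(R-\lambda^2)=2p\,\ell^0_t(R-\lambda^2)$ and $\ell^{0-}_t(R-\lambda^2)=2(1-p)\,\ell^0_t(R-\lambda^2)$, i.e. $\ell^0_t(R-\lambda^2)=\tfrac{1}{2p}\ell^{0+}_t(R-\lambda^2)=\tfrac{1}{2(1-p)}\ell^{0-}_t(R-\lambda^2)$. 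I do not expect a genuine obstacle: the argument is short, and the only points requiring care are deciding which of the three contributions to $dR_s$ survives multiplication by $\mathbb{I}_{\{R_s=\lambda^2(s)\}}$ — everything but the local-time term is annihilated via Lemma \ref{timezero}(iv) — and keeping the normalizations of the one-sided and symmetric local times consistent across (\ref{tanaka}), (\ref{symtanaka}) and (\ref{sym}); nothing beyond Lemma \ref{timezero}(iv) and the support property (\ref{support}) of the semimartingale local time is needed, and the non-absolutely-continuous case demands no extra work.
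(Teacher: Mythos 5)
Your proof is correct and follows essentially the same route as the paper: both arguments come down to comparing the symmetric and one-sided Tanaka formulas for $(R_t-\lambda^2(t))^+$ and evaluating $\int_0^t\mathbb{I}_{\{R_s=\lambda^2(s)\}}\,d(R_s-\lambda^2(s))$ term by term, with the martingale and $ds$ contributions killed via Lemma \ref{timezero}(iv) and the local-time contribution identified via the support property (\ref{support}). Your version is if anything slightly more explicit (e.g.\ the vanishing-bracket argument for the stochastic integral), and your choice of the $\tfrac12$-normalization in the one-sided Tanaka formula is the one actually used in, and consistent with, the paper's proof and the identity $\ell^0=\tfrac{1}{2}(\ell^{0+}+\ell^{0-})$.
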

\begin{proof}
Since $R-\lambda^2$ is a continuous semimartingale w.r.t. $P$, by Tanaka's formula 
(\ref{tanaka}) it follows $P$-a.s.
\begin{eqnarray*}
(R_t-\lambda^2(t))^+&=&(R_0-\lambda^2(0))^++\int_0^t\mathbb{I}_{\{R_s- \lambda^2(s)>0\}}d(R_s-\lambda^2(s))
+\frac12\ell^{0+}_t(R-\lambda^2).
\end{eqnarray*}
On the other hand, the symmetrized Tanaka formula (\ref{symtanaka}) 
together with Lemma \ref{timezero}(iii) gives
\begin{eqnarray*}
(R_t-\lambda^2(t))^+&=&(R_0-\lambda^2(0))^+
+\int_0^t\frac{\mathbb{I}_{\{R_s- \lambda^2(s)>0\}}+\mathbb{I}_{\{R_s- \lambda^2(s)\ge 0\}}}{2}
d(R_s-\lambda^2(s))\\
&&+\frac12\ell^{0}_t(R-\lambda^2)\\
&=&(R_0-\lambda^2(0))^+
+\int_0^t\mathbb{I}_{\{R_s- \lambda^2(s)>0\}}d(R_s-\lambda^2(s))-\frac12
\int_0^t\mathbb{I}_{\{R_s= \lambda^2(s)\}}d\lambda^2(s)\\
&&+p\ell^{0}_t(R-\lambda^2).
\end{eqnarray*}
Comparing the two formulas for $(R_t-\lambda^2(t))^+$ we obtain the first statement. The second follows 
from (\ref{sym}) by simple algebraic transformations. If $\lambda^2$ is absolutely continuous, then 
$$
\int_0^t\mathbb{I}_{\{R_s= \lambda^2(s)\}}d\lambda^2(s)=
\int_0^t\mathbb{I}_{\{R_s= \lambda^2(s)\}}(\lambda^2)'(s)ds=0
$$ 
by Lemma \ref{timezero}(iv), and the last statement follows.
\end{proof}\\
\begin{rem}\label{nosolution+}
(i) Using the previous Lemma \ref{discont} and (\ref{sym}), one can easily derive that 
$$
(2p-1)\ell^0_t(R-\lambda^2)=\frac{\ell^{0+}_t(R-\lambda^2)-\ell^{0-}_t(R-\lambda^2)}{2}+
\int_0^t\mathbb{I}_{\{R_s= \lambda^2(s)\}}d\lambda^2(s)
$$
and 
$$
(1-p)\ell^{0+}_t(R-\lambda^2)=p\ell^{0-}_t(R-\lambda^2)-
\int_0^t\mathbb{I}_{\{R_s= \lambda^2(s)\}}d\lambda^2(s).
$$
(ii) Let $\lambda^2$ be absolutely continuous. Then 
$\int_0^t\mathbb{I}_{\{R_s= \lambda^2(s)\}}d\lambda^2(s)=0$ by Lemma \ref{timezero}(iv). 
If now $|2p-1|>1$, then a solution to 
(\ref{skewbesq}) does not exist. This improves results of \cite[\mbox{Remark 2.7(i)}]{Tr5} where this could only be shown for 
those $\lambda^2$'s for which a solution to (\ref{skewbesq}) could be constructed. 
In fact, by (i) it holds that 
$\ell^0(R-\lambda^2),\ell^{0+}(R-\lambda^2),\ell^{0-}(R-\lambda^2)\equiv 0$. Now, in order to conclude see \cite[\mbox{Remark 2.7(i)}]{Tr5}. 
There analytic methods were used. 
In Corollary \ref{nosolution} below we provide another direct probabilistic proof for special $\lambda^2$'s and special initial conditions. 
\end{rem}
The next lemma is useful and appears in case of upper local times in \cite[\mbox{Lemme, p.74}]{Wein}. 
In the case of other local times (lower, and symmetric) it can easily directly be derived from \cite[\mbox{Lemme, p.74}]{Wein}. 
Using different and for themselves important 
formulas for the computation of local times Lemma \ref{replocaltime} is directly shown in \cite[\mbox{Corollary 2.6, and following remark}]{oukrut}. 
\begin{lem}\label{replocaltime} Let $X,Y$ be two continuous semimartingales, with $X_0=Y_0$. Suppose that $\ell^{0+}(X-Y)\equiv 0$.
Then the following representation formula holds for $\ell_s=\ell^{0+}_s$:
\begin{eqnarray}\label{replocaltime1}
\ell_t(X\vee Y)=
\int_0^t\mathbb{I}_{\{Y_s<0\}} d\ell_s(X)+
\int_0^t\mathbb{I}_{\{X_s\le 0\}}d\ell_s(Y).
\end{eqnarray}
Suppose that additionally $\ell^{0+}(Y-X)\equiv 0$. Then (\ref{replocaltime1}) holds also for 
$\ell_s=\ell^{0-}_s$, and $\ell_s=\ell^0_s$. In particular 
\begin{eqnarray}\label{replocaltime2}
\int_0^t\mathbb{I}_{\{Y_s=0\}} d\ell_s(X)=\int_0^t\mathbb{I}_{\{X_s=0\}}d\ell_s(Y)
\end{eqnarray}
holds for 
$\ell_s=\ell^{0+}_s$, $\ell_s=\ell^{0-}_s$, and $\ell_s=\ell^0_s$.\\
\end{lem}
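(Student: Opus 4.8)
One can read this off from \cite[\mbox{Lemme, p.74}]{Wein} (for $\ell^{0+}$) or from \cite[\mbox{Corollary 2.6, and following remark}]{oukrut} (all three cases); for completeness I would argue as follows. The plan is to settle $\ell_s=\ell^{0+}_s$ first, by writing down an explicit semimartingale decomposition of $X\vee Y$ and reading off its right local time at $0$. The cases $\ell_s=\ell^{0-}_s$ and $\ell_s=\ell^{0}_s$ then follow by the same scheme run with left local times, averaged via (\ref{sym}); the additional hypothesis $\ell^{0+}(Y-X)\equiv 0$ is in any case needed to write down the companion formula for $Y\vee X$, which is what yields (\ref{replocaltime2}): subtracting (\ref{replocaltime1}) for $Y\vee X$ from (\ref{replocaltime1}) for $X\vee Y$ --- the two left-hand sides being equal --- leaves $\int_0^t\mathbb{I}_{\{Y_s=0\}}\,d\ell_s(X)$ against $\int_0^t\mathbb{I}_{\{X_s=0\}}\,d\ell_s(Y)$.

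For the decomposition, write $X\vee Y=Y+(X-Y)^+$ and apply Tanaka's formula (cf.\ \cite[\mbox{VI. (1.2) Theorem}]{RYor}) to the continuous semimartingale $X-Y$; since $\ell^{0+}(X-Y)\equiv0$ and $X_0=Y_0$, the local-time term and the term $(X_0-Y_0)^+$ vanish, so $(X_t-Y_t)^+=\int_0^t\mathbb{I}_{\{X_s>Y_s\}}\,d(X_s-Y_s)$ and therefore
\begin{eqnarray*}
X_t\vee Y_t=X_0+\int_0^t\mathbb{I}_{\{X_s\le Y_s\}}\,dY_s+\int_0^t\mathbb{I}_{\{X_s>Y_s\}}\,dX_s .
\end{eqnarray*}
As the two indicators are disjoint this gives $d\langle X\vee Y\rangle_s=\mathbb{I}_{\{X_s\le Y_s\}}\,d\langle Y\rangle_s+\mathbb{I}_{\{X_s>Y_s\}}\,d\langle X\rangle_s$, while trivially $(X\vee Y)_s=X_s$ on $\{X_s>Y_s\}$ and $(X\vee Y)_s=Y_s$ on $\{X_s\le Y_s\}$.

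Now I would localise the local time. From the last two facts, for every $\varepsilon>0$ the random measures $\mathbb{I}_{\{X_s>Y_s\}}\mathbb{I}_{[0,\varepsilon)}((X\vee Y)_s)\,d\langle X\vee Y\rangle_s$ and $\mathbb{I}_{\{X_s>Y_s\}}\mathbb{I}_{[0,\varepsilon)}(X_s)\,d\langle X\rangle_s$ coincide; dividing by $\varepsilon$, rewriting each side through the occupation times formula (\ref{occtim}), and letting $\varepsilon\downarrow0$ --- using that $a\mapsto\ell^{a+}$ may be taken right-continuous, cf.\ \cite[\mbox{VI. (1.7) Theorem}]{RYor} --- one obtains $\int_0^t\mathbb{I}_{\{X_s>Y_s\}}\,d\ell^{0+}_s(X\vee Y)=\int_0^t\mathbb{I}_{\{X_s>Y_s\}}\,d\ell^{0+}_s(X)$ for all $t$, and likewise with $\{X_s\le Y_s\}$ and $Y$ in place of $\{X_s>Y_s\}$ and $X$. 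Since these two sets are complementary, adding gives $d\ell^{0+}_s(X\vee Y)=\mathbb{I}_{\{X_s>Y_s\}}\,d\ell^{0+}_s(X)+\mathbb{I}_{\{X_s\le Y_s\}}\,d\ell^{0+}_s(Y)$. Finally $d\ell^{0+}(X)$ charges only $\{X_s=0\}$ and $d\ell^{0+}(Y)$ only $\{Y_s=0\}$ (cf.\ \cite[\mbox{VI. (1.3) Proposition}]{RYor}), so on those supports $\mathbb{I}_{\{X_s>Y_s\}}=\mathbb{I}_{\{Y_s<0\}}$ and $\mathbb{I}_{\{X_s\le Y_s\}}=\mathbb{I}_{\{X_s\le0\}}$, which is exactly (\ref{replocaltime1}) for $\ell_s=\ell^{0+}_s$.

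The step I expect to be the main obstacle is this localisation: passing from the coincidence of the quadratic-variation measures (and of the processes themselves) on the progressive set $\{X_s>Y_s\}$ to the coincidence of the local-time measures there. This is precisely where (\ref{occtim}) and the c\`adl\`ag-in-$a$ regularity of $\ell^{a}$ come in, and where one has to keep strict and non-strict inequalities apart so that the degenerate set $\{X_s=Y_s=0\}$ is charged consistently (here to $Y$, through $\{X_s\le Y_s\}$ and $\{X_s\le0\}$). The remaining verifications --- the left-local-time version and the sign bookkeeping --- are routine.
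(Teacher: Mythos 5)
Your skeleton is sound and, as far as it goes, matches the route one would take to make this lemma self-contained (the paper itself gives no proof but quotes \cite[Lemme, p.\,74]{Wein} and \cite[Corollary 2.6 and the following remark]{oukrut}): the decomposition $X\vee Y=X_0+\int_0^t\mathbb{I}_{\{X_s\le Y_s\}}dY_s+\int_0^t\mathbb{I}_{\{X_s>Y_s\}}dX_s$ obtained from Tanaka's formula and $\ell^{0+}(X-Y)\equiv0$ is correct, so is the bracket identity, and deriving (\ref{replocaltime2}) by subtracting the formula for $Y\vee X$ from the one for $X\vee Y$ is exactly the standard argument. The occupation-times step is also legitimate and yields the two sub-identities for Lebesgue-almost every level $a$.

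The gap is the passage to $a=0$, i.e.\ precisely the step you flag as the main obstacle, and it is not closed by invoking the right-continuity of $a\mapsto\ell^{a+}_t$. Right-continuity of the total masses gives weak convergence of the measures $d\ell^{a+}_s$ on $[0,t]$, but not convergence of their integrals against the discontinuous time-integrand $\mathbb{I}_{\{X_s>Y_s\}}$: for a Brownian motion $B$ and the open set $G=\{s:B_s\neq0\}$ one has $\int_0^t\mathbb{I}_G(s)\,d\ell^{a+}_s(B)=\ell^{a+}_t(B)$ for every $a>0$ but $=0$ at $a=0$, so such maps need not be right-continuous at $0$. In your situation, since $d\ell^{a+}_s(X)$ is carried by $\{X_s=a\}$, the quantity $\int_0^t\mathbb{I}_{\{X_s>Y_s\}}d\ell^{a+}_s(X)$ equals $\int_0^t\mathbb{I}_{\{Y_s<a\}}d\ell^{a+}_s(X)$, and the semicontinuity available from weak convergence only sandwiches the limit between $\int_0^t\mathbb{I}_{\{Y_s<0\}}d\ell^{0+}_s(X)$ and $\int_0^t\mathbb{I}_{\{Y_s\le0\}}d\ell^{0+}_s(X)$; the discrepancy $\int_0^t\mathbb{I}_{\{X_s=Y_s=0\}}d\ell^{0+}_s(X)$ need not vanish (take $X=Y$). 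Deciding how this diagonal mass is attributed --- the whole point of the strict/non-strict split in (\ref{replocaltime1}) --- is essentially equivalent to (\ref{replocaltime2}), which your scheme only obtains afterwards, so the argument is circular exactly at its crux. The published proofs avoid the limit altogether by working from exact identities (e.g.\ $\ell^{0+}_t(Z)=\int_0^t\mathbb{I}_{\{Z_s=0\}}dZ^+_s$ in the paper's normalisation, applied to $(X\vee Y)^+=X^+\vee Y^+$) that track the set $\{X_s=Y_s=0\}$ precisely; either carry out that computation or leave the lemma as a citation, as the paper does.
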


\begin{lem}\label{firstrem}
Let $R^{(1)},R^{(2)}$, be two solutions to (\ref{skewbesq}) with same Brownian motion, on the 
same filtered probability space $(\Omega, \F, (\F_t)_{t\ge 0}, P)$, and such that $R^{(1)}_0=R^{(2)}_0$ $P$-a.s. Then:
\begin{itemize}
\item[(i)] The following representation formula holds for $\ell_s=\ell^{0+}_s$, 
$\ell_s=\ell^{0-}_s$, and $\ell_s=\ell^0_s$:\\
$$
\ell_t(R^{(1)}\vee R^{(2)}-\lambda^2)=
\int_0^t\mathbb{I}_{\{R^{(2)}_s-\lambda^2(s)<0\}} d\ell_s(R^{(1)}-\lambda^2)+
\int_0^t\mathbb{I}_{\{R^{(1)}_s-\lambda^2(s)\le 0\}}d\ell_s(R^{(2)}-\lambda^2).
$$
\item[(ii)]  The supremum $R^1\vee R^2$, and the infimum $R^1\wedge R^2$, are also solutions to 
(\ref{skewbesq}).
\item[(iii)]  For the supremum $S:=R^1\vee R^2$, and the infimum $I:=R^1\wedge R^2$, it holds $P$-a.s. that 
$$
S_{t}\mathbb{I}_{\Omega\setminus\{S_t>0\}\cap\{I_t\ge 0\}}=
I_{t}\mathbb{I}_{\Omega\setminus\{S_t>0\}\cap\{I_t\ge 0\}} \ \ \ \forall t\ge 0.
$$
\end{itemize}
\end{lem}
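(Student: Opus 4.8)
The plan is to deduce all three parts from Lemma~\ref{replocaltime} and one application of Tanaka's formula, after first checking that every local time of the difference $R^{(1)}-R^{(2)}$ vanishes.

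First I would record that preliminary fact. Putting $Z:=R^{(1)}-R^{(2)}$ and subtracting the two copies of (\ref{skewbesq}), $Z$ is a continuous semimartingale whose martingale part has quadratic variation $\langle Z\rangle_t=\sigma^2\int_0^t(\sqrt{|R^{(1)}_s|}-\sqrt{|R^{(2)}_s|})^2\,ds$. Since $(\sqrt{|a|}-\sqrt{|b|})^2\le\big|\,|a|-|b|\,\big|\le|a-b|$, one gets $\int_0^t|Z_s|^{-1}\mathbb{I}_{\{Z_s\neq 0\}}\,d\langle Z\rangle_s\le\sigma^2 t<\infty$ $P$-a.s., and then the occupation times formula (\ref{occtim}) together with the non-integrability of $1/|a|$ near the origin forces $\ell^{0+}(Z)=\ell^{0-}(Z)=\ell^{0}(Z)\equiv 0$, exactly as in the proof of Lemma~\ref{timezero}(i); by the symmetric roles of $R^{(1)}$ and $R^{(2)}$ the same holds for $R^{(2)}-R^{(1)}$. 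Part~(i) is then immediate: I would apply Lemma~\ref{replocaltime} with $X:=R^{(1)}-\lambda^2$ and $Y:=R^{(2)}-\lambda^2$, which are continuous semimartingales with $X_0=Y_0$ (since $R^{(1)}_0=R^{(2)}_0$) and $\ell^{0+}(X-Y)=\ell^{0+}(Y-X)\equiv0$ by the above; the representation (\ref{replocaltime1}) for $\ell=\ell^{0+},\ell^{0-},\ell^{0}$ is precisely the claimed formula, because $X\vee Y=R^{(1)}\vee R^{(2)}-\lambda^2$.

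For part~(ii) I would write $S:=R^{(1)}\vee R^{(2)}=R^{(1)}+(R^{(2)}-R^{(1)})^+$ and apply Tanaka's formula to $(R^{(2)}-R^{(1)})^+$; since $\ell^{0+}(R^{(2)}-R^{(1)})\equiv0$ and $R^{(1)}_0=R^{(2)}_0$ this yields $S_t=S_0+\int_0^t\mathbb{I}_{\{R^{(1)}_s\ge R^{(2)}_s\}}\,dR^{(1)}_s+\int_0^t\mathbb{I}_{\{R^{(2)}_s>R^{(1)}_s\}}\,dR^{(2)}_s$. Substituting (\ref{skewbesq}) and using that $S_s=R^{(1)}_s$ on $\{R^{(1)}_s\ge R^{(2)}_s\}$ and $S_s=R^{(2)}_s$ on $\{R^{(2)}_s>R^{(1)}_s\}$, the martingale part collapses to $\int_0^t\sigma\sqrt{|S_s|}\,dW_s$ and the $ds$-drift to $\int_0^t\frac{\sigma^2}{4}(\delta-bS_s)\,ds$, while the integrability requirement for $S$ is inherited from $R^{(1)},R^{(2)}$ (via $|S_s|\le|R^{(1)}_s|+|R^{(2)}_s|$) and the driving noise is the common Brownian motion. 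The step I expect to be the real work is identifying the remaining term as $(2p-1)\,d\ell^{0}(S-\lambda^2)$: concretely, one must show that $\int_0^t\mathbb{I}_{\{R^{(1)}_s\ge R^{(2)}_s\}}\,d\ell^{0}_s(R^{(1)}-\lambda^2)+\int_0^t\mathbb{I}_{\{R^{(2)}_s>R^{(1)}_s\}}\,d\ell^{0}_s(R^{(2)}-\lambda^2)$ equals $\ell^{0}_t(S-\lambda^2)$. I would rewrite the latter via part~(i) with the indicators $\mathbb{I}_{\{R^{(2)}_s-\lambda^2(s)<0\}}$ and $\mathbb{I}_{\{R^{(1)}_s-\lambda^2(s)\le0\}}$; since $d\ell^{0}(R^{(i)}-\lambda^2)$ is carried by $\{R^{(i)}_s=\lambda^2(s)\}$ by (\ref{support}), these indicators agree with $\mathbb{I}_{\{R^{(1)}_s\ge R^{(2)}_s\}}$, resp. $\mathbb{I}_{\{R^{(2)}_s>R^{(1)}_s\}}$, off the set $\{R^{(1)}_s=R^{(2)}_s=\lambda^2(s)\}$, so the discrepancy between the two expressions is exactly $\int_0^t\mathbb{I}_{\{R^{(1)}_s=R^{(2)}_s=\lambda^2(s)\}}\big(d\ell^{0}_s(R^{(1)}-\lambda^2)-d\ell^{0}_s(R^{(2)}-\lambda^2)\big)$, which vanishes by (\ref{replocaltime2}) (again using (\ref{support}) to match the two supports). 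This shows $S$ solves (\ref{skewbesq}); the infimum $I:=R^{(1)}\wedge R^{(2)}$ is treated in the same way, starting from $I=R^{(1)}-(R^{(1)}-R^{(2)})^+$ and applying Lemma~\ref{replocaltime} to $\lambda^2-R^{(1)},\lambda^2-R^{(2)}$, or more cheaply from $I=R^{(1)}+R^{(2)}-S$ and the decomposition of $S$ just obtained.

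Part~(iii) is then essentially free: pathwise $I_t\le S_t$, so on $(\Omega\setminus\{S_t>0\})\cap\{I_t\ge0\}$ one has $0\le I_t\le S_t\le0$, hence $S_t=I_t=0$ there and both sides of the asserted identity vanish, for every $t\ge0$. Thus the only genuinely delicate point is the local time bookkeeping inside part~(ii); everything else reduces to Tanaka's formula and the representation results of Lemma~\ref{replocaltime} already available.
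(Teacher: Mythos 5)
Your treatment of (i) and (ii) is correct and follows essentially the paper's route: the vanishing of $\ell^{0\pm}(R^{(1)}-R^{(2)})$ via the occupation times formula and the inequality $(\sqrt{|a|}-\sqrt{|b|})^2\le|a-b|$, then Lemma \ref{replocaltime} for (i), and for (ii) the Tanaka decomposition of $(R^{(2)}-R^{(1)})^+$ followed by the indicator bookkeeping on the supports of the local times via (\ref{support}) and (\ref{replocaltime2}). You are in fact more explicit than the paper about that last step, which the published proof compresses into ``after some calculations''; your identification of the residual term as an integral over $\{R^{(1)}_s=R^{(2)}_s=\lambda^2(s)\}$ that cancels by (\ref{replocaltime2}) is exactly the right accounting, and your shortcut $I=R^{(1)}+R^{(2)}-S$ for the infimum matches the paper's use of $\ell^0(X\vee Y)+\ell^0(X\wedge Y)=\ell^0(X)+\ell^0(Y)$.

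Part (iii), however, contains a genuine gap coming from a misreading of the indicator set. The set $\Omega\setminus\{S_t>0\}\cap\{I_t\ge 0\}$ is meant as $\Omega\setminus\bigl(\{S_t>0\}\cap\{I_t\ge 0\}\bigr)=\{S_t\le 0\}\cup\{I_t<0\}$; this is forced by how the lemma is used in the proof of Theorem \ref{pathunique2} (``we can therefore neglect what happens outside $\{S_t>0\}\cap\{I_t\ge 0\}$''). You read it as $\{S_t\le 0\}\cap\{I_t\ge 0\}$, on which the claim is vacuous since $0\le I_t\le S_t\le 0$ there. The actual assertion is not free: the definition of a weak solution does not require $R_0\ge 0$, so solutions may take negative values, and one must prove both that $P(S_t>0,\ I_t<0)=0$ and that $S_t=I_t$ on $\{S_t\le 0\}$; neither follows from $I\le S$ alone. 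The paper obtains these by introducing the scale-type function $h(x)=-\int_0^{-x}y^{\delta/2}e^{by/2}\,dy$ for $x<0$ and $h\equiv 0$ on $\R^+$, which satisfies $Lh=0$ off the origin and has $h'\equiv 0$ on $\R^+$, so the local time term on the curve $\lambda^2\ge 0$ drops out of It\^o's formula and $h(S)-h(I)$ is a nonnegative local martingale started at $0$; hence $h(S_t)=h(I_t)$ a.s., and the strict monotonicity of $h$ on $(-\infty,0]$ together with $h\equiv 0$ on $[0,\infty)$ yields both missing facts. You need to supply an argument of this kind; your observation only recovers the trivial fragment of the statement.
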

\begin{proof}
(i) Since $R^{(1)},R^{(2)}$, are continuous semimartingales w.r.t. $P$, the same is true for 
$R^{(1)}-\lambda^2,R^{(2)}-\lambda^2$. Using that 
$\ell_s((R^{(i)}-\lambda^2)-(R^{(j)}-\lambda^2))=\ell_s(R^{(i)}-R^{(j)})\equiv 0$, for $i\not=j$, 
$i,j\in \{1,2\}$, (i) 
follows from Lemma \ref{replocaltime}.
Note that if $\lambda^2$ is absolutely continuous, then upper, lower, and symmetric local times are constant 
multiples of each other (see Lemma \ref{discont}, and Lemma \ref{timezero}(iii)), and the statement 
would follow immediately from \cite[\mbox{Lemme, p.74}]{Wein}. \\  
(ii) Writing $R^{(1)}_t\vee R^{(2)}_t=(R^{(1)}_t-R^{(2)}_t)^+ +R^{(2)}_t$ and applying Tanaka's formula 
(cf. e.g. \cite[VI.(1.2)]{RYor}), we easily obtain after some calculations
\begin{eqnarray*}
R^{(1)}_t\vee R^{(2)}_t & = & R^{(1)}_0\vee R^{(2)}_0+\int_0^t\sigma\sqrt{|R^{(1)}_s\vee R^{(2)}_s|}dW_s+
\int_0^t\frac{\sigma^2}{4}(\delta-bR^{(1)}_s\vee R^{(2)}_s)dt\\
&&\hspace*{-2cm}+(2p-1)\left 
\{\int_0^t\mathbb{I}_{\{R^{(2)}_s-\lambda^2(s)<0\}} d\ell^0_s(R^{(1)}-\lambda^2)+
\int_0^t\mathbb{I}_{\{R^{(1)}_s-\lambda^2(s)\le 0\}}d\ell^0_s(R^{(2)}-\lambda^2)
\right \}.
\end{eqnarray*}
Now, we just use (i) and conclude that $R^1\vee R^2$ is another solution. 
Clearly, by linearity $R^1\wedge R^2$ 
is also a solution. One just has to use the formula
$$
\ell^{0}(X\vee Y)+\ell^{0}(X\wedge Y)=\ell^{0}(X)+\ell^{0}(Y),
$$
which can easily be derived from the corresponding formula for upper local times (see e.g. \cite{RYor}, \cite{Yan2}, \cite{ouk}).\\
(iii)  Define the function
\[ h(x):= \left\{ \begin{array}{r@{\quad\quad}l}
 -\int_0^{-x}y^{\frac{\delta}{2}}e^{\frac{by}{2}} dy& \mbox{ for}\ x<0; \\ 
  0 \ \ \ \ \ \ \ & \mbox{ for}\ x\ge 0. \end{array} \right. \] \\ \\
$h$ is continuously differentiable with locally integrable second derivative. We may hence apply It\^o's
formula with $h$. Note that $h$ is a harmonic function, i.e. $h(R)$ is a local martingale for any solution $R$ of (\ref{skewbesq}). Further, $h$ is strictly increasing in $(-\infty, 0]$. After taking expectations and stopping 
w.r.t. $\tau_n:=\inf\{t\ge 0||S_t|\ge n\}$ we obtain
\begin{eqnarray*}
E[h(S_{t\wedge \tau_n})-h(I_{t\wedge \tau_n})]=0\\
\end{eqnarray*}
for any $t\ge 0$. Letting $n\to\infty$ we get $h(S_{t})=h(I_{t})$ $P$-a.s. By continuity of the sample paths, this holds simultaneously for all $t\ge0$.
Decomposing $\Omega$ in disjoint sets
$$
\{S_t>0\}\cap\{I_t\ge 0\}, \ \{S_t>0\}\cap\{I_t< 0\}, \ \{S_t\le 0\},
$$
we get
$$
h(S_{t})\mathbb{I}_{\{S_t\le0\}}=h(I_{t})\mathbb{I}_{\{S_t\le 0\}}+h(I_{t})
\mathbb{I}_{\{S_t>0\}\cap\{I_t<0\}},
$$
and then
$$
\mathbb{I}_{\{S_t>0\}\cap\{I_t<0\}}=0,
$$
as well as  
$$
S_{t}\mathbb{I}_{\{S_t\le0\}}=I_{t}\mathbb{I}_{\{S_t\le0\}} 
$$
immediately follow.
\end{proof}\\ 
\begin{rem}\label{first}
(i) In order to obtain pathwise uniqueness for (\ref{skewbesq}) is is enough to show that the expectation $E[R_t]$ 
is uniquely determined by (\ref{skewbesq}) for all $t\ge 0$.  Indeed, if the latter holds then $E[S_t-I_t]=0$ and the 
result follows (cf. Lemma \ref{firstrem}(ii), and (iii) for the definition of $S$ and $I$). 
Unfortunately, it turns out that the determination of $E[R_t]$ seems to be rather difficult. Therefore we proceed as indicated in Remark \ref{procedure}.  \\
(ii) Suppose that we replace $(2p-1)\ell^0(R-\lambda^2)$ by $\frac{2p-1}{2p}\ell^{0+}(R-\lambda^2)$ 
(resp. $\frac{2p-1}{2(1-p)}\ell^{0-}(R-\lambda^2)$) in (\ref{skewbesq}). 
Using Lemma \ref{replocaltime} one can see as in the proof of 
Lemma \ref{firstrem},
that with 
any two solutions to the modified equation (\ref{skewbesq}) the sup and inf is again a solution. 
Consequently, as will be seen below, pathwise uniqueness can also be derived for the modified 
equation under the same assumptions. However, we work with symmetric local times, 
because the (Revuz) measures associated to symmetric local times appear naturally in integration 
by parts formulas for the corresponding Markov process generators w.r.t. some invariant or subinvariant measure (see e.g. \cite[p. 391, and section 2.1.3]{Tr5}, and \cite[section 3.1.(a)]{RuTr4} for the skew Brownian motion). \\
\end{rem}
\begin{lem}\label{replocal2}  
Let $X$ be a continuous semimartingale. 
Let $f$ be a strictly increasing function on $\R$, which is the difference of two convex functions. 
\begin{itemize}
\item[(i)] We have a.s. for any $a\in \R$
$$
\ell^{f(a)\pm}_t(f(X))=f'^{\pm}(a)\ell^{a\pm}_t(X);\ \ t\ge 0.
$$
In particular, if $R$ is a solution to (\ref{skewbesq}), then $P$-a.s.
$$
\ell^0_t(f(R-\lambda^2)-f(0))=\frac{f'^{+}(0)}{2}\ell^{0+}_t(R-\lambda^2)+
\frac{f'^{-}(0)}{2}\ell^{0-}_t(R-\lambda^2);\ \ t\ge 0,
$$
where $f'^{-}$ denotes the left hand derivative (resp. $f'^{+}$ the right hand derivative) of $f$.\\
\item[(ii)] If $f$ is additionally continuously differentiable, and 
$R$ is a solution to (\ref{skewbesq}), then $P$-a.s. 
$$
\ell_t^0(f(R)-f(\lambda^2))=\int_0^t f'(\lambda^2(s))d\ell_s^0(R-\lambda^2).
$$
\end{itemize}
\end{lem}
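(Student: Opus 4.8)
The plan is to prove (i) by a change-of-variables computation resting on the occupation times formula (\ref{occtim}), and then to deduce (ii) from (i).

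\emph{Part (i).} As a difference of convex functions, $f$ is locally Lipschitz, hence absolutely continuous and a.e. differentiable; its one-sided derivatives $f'^{+}$ (right-continuous) and $f'^{-}$ (left-continuous) exist everywhere and coincide off a countable set, and $f(X)$ is a continuous semimartingale with martingale part $\int_0^{\cdot}f'^{-}(X_s)\,dM^X_s$, so that $d\langle f(X)\rangle_s=(f'(X_s))^2\,d\langle X\rangle_s$ --- the version of the derivative being immaterial since the non-differentiability set of $f$ is $d\langle X\rangle$-negligible by (\ref{occtim}). Applying (\ref{occtim}) first to $f(X)$ against a positive Borel test function $g$, then to $X$ against $a\mapsto g(f(a))(f'(a))^2$, and substituting $b=f(a)$ (legitimate because $f$ is absolutely continuous and strictly increasing), one obtains
\begin{eqnarray*}
\int_{\R}g(b)\,\ell^{b}_t(f(X))\,db & = & \int_{\R}g(f(a))(f'(a))^2\,\ell^{a}_t(X)\,da\\
& = & \int_{\R}g(b)\,f'(f^{-1}(b))\,\ell^{f^{-1}(b)}_t(X)\,db,
\end{eqnarray*}
whence $\ell^{f(a)}_t(f(X))=f'(a)\,\ell^{a}_t(X)$ for a.e. $a$, simultaneously for the symmetric and the two one-sided local times (they differ only on a countable set of levels, and $f'^{+}=f'^{-}$ a.e.).

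To upgrade this to every $a$ and to the genuinely one-sided identity I would use that $a\mapsto f'^{+}(a)$, $a\mapsto\ell^{a+}_t(X)$, and --- $f$ being a continuous increasing bijection onto its range --- $a\mapsto\ell^{f(a)+}_t(f(X))$ are right-continuous, and similarly with ``$-$'' and left-continuity; two one-sidedly continuous functions agreeing a.e. agree everywhere. Since both sides are continuous in $t$ (local times are), the exceptional null set may be taken independent of $t$, which gives $\ell^{f(a)\pm}_t(f(X))=f'^{\pm}(a)\,\ell^{a\pm}_t(X)$ for all $a,t$. The ``in particular'' statement then follows by taking $X=R-\lambda^2$ and $a=0$, noting $\ell^0_t(f(R-\lambda^2)-f(0))=\ell^{f(0)}_t(f(R-\lambda^2))$, and decomposing the symmetric local time via (\ref{sym}).

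\emph{Part (ii).} Here $f\in C^1$, so $f'^{+}=f'^{-}=f'$ is continuous; set $X:=R-\lambda^2$ and $Z_t:=f(R_t)-f(\lambda^2(t))$, a continuous semimartingale ($\lambda^2$ being continuous and locally of bounded variation), with $\{Z_s=0\}=\{X_s=0\}$ by injectivity of $f$, and $d\langle Z\rangle_s=(f'(R_s))^2\,d\langle X\rangle_s=(f'(R_s))^2\,d\langle R\rangle_s$. I would argue through the approximation of the local-time measure: for a continuous semimartingale $W$ and a bounded continuous $\varphi$, the random measures $\tfrac1{2\varepsilon}\mathbb{I}_{\{|W_s|<\varepsilon\}}\,d\langle W\rangle_s$ converge weakly, as $\varepsilon\downarrow0$, to $d\ell^0_s(W)$ (because by (\ref{occtim}) their distribution functions $\tfrac1{2\varepsilon}\int_0^u\mathbb{I}_{\{|W_s|<\varepsilon\}}\,d\langle W\rangle_s$ converge for every $u$ to the continuous function $\ell^0_u(W)$, in the spirit of (\ref{support})), so that $\tfrac1{2\varepsilon}\int_0^t\varphi(s)\mathbb{I}_{\{|W_s|<\varepsilon\}}\,d\langle W\rangle_s\to\int_0^t\varphi(s)\,d\ell^0_s(W)$. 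On the slab $\{|Z_s|<\varepsilon\}$ a first-order Taylor expansion of $f$ about $\lambda^2(s)$ shows $|X_s|$ to be of order $\varepsilon/f'(\lambda^2(s))$ and $(f'(R_s))^2$ to be close to $(f'(\lambda^2(s)))^2$; rescaling the width of the slab by $f'(\lambda^2(s))$ then turns $\tfrac1{2\varepsilon}\int_0^t\mathbb{I}_{\{|Z_s|<\varepsilon\}}(f'(R_s))^2\,d\langle X\rangle_s$, up to a term vanishing as $\varepsilon\downarrow0$, into $\int_0^t f'(\lambda^2(s))\cdot\tfrac1{2\varepsilon_s}\mathbb{I}_{\{|X_s|<\varepsilon_s\}}\,d\langle X\rangle_s$ with $\varepsilon_s=\varepsilon/f'(\lambda^2(s))$, which converges to $\int_0^t f'(\lambda^2(s))\,d\ell^0_s(X)$. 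As $X=R-\lambda^2$ this is the assertion. An equivalent, perhaps more transparent route is to localize in time: on a time set where $\lambda^2$ stays near a value $c$, compare $Z$ with $f(R)-f(c)$, for which part (i) gives $\ell^0(f(R)-f(c))=f'(c)\,\ell^0(R-c)$, control the error by continuity of $f'$ and by Lemma \ref{timezero}(iv) (the curve is Lebesgue-negligible for $R$), and sum over a partition of the range of $\lambda^2$.

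\emph{Main difficulty.} In both parts the delicate point is the passage from an ``almost everywhere'' or ``approximate'' identity to an exact pathwise one: in (i) it is the one-sided continuity upgrade, which must pair the correct one-sided objects on both sides; in (ii) it is the uniform control, near the contact set, of the discrepancy between the slab $\{|Z_s|<\varepsilon\}$ weighted by $(f'(R_s))^2$ and the rescaled slab around $\{X_s=0\}$ weighted by $f'(\lambda^2(s))$. It is precisely the time-dependence of $\lambda^2$ that prevents a direct invocation of the classical change-of-variables formula for local times and forces either the time-localization or this slab estimate; a mild eye should also be kept on levels where $f'(\lambda^2(s))$ could vanish, which does not occur for the Kummer functions used later.
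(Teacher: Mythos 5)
You should first be aware that the paper does not prove this lemma at all: it simply cites \cite{oukrut} (the Remark on p.~222 for (i) and Corollary 2.11 for (ii)), so any actual argument you supply is necessarily a different route. Your part (i) is a correct and essentially standard occupation-density proof: the identification of $d\langle f(X)\rangle$ via the Meyer--It\^o decomposition, the double application of (\ref{occtim}) with the change of variables $b=f(a)$ (legitimate since a difference of convex functions is locally Lipschitz, hence absolutely continuous), and the upgrade from an a.e.\ identity to the exact one-sided identity by pairing right-continuity of $f'^{+}$ and of $a\mapsto\ell^{a+}_t$ (resp.\ left-continuity for the ``$-$'' versions) are all sound, and the ``in particular'' statement follows as you say.

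Part (ii), however, has a genuine gap at exactly the point you flag. The convergence $\tfrac1{2\varepsilon}\mathbb{I}_{\{|W_s|<\varepsilon\}}d\langle W\rangle_s\to d\ell^0_s(W)$ is fine for a fixed width, but your reduction replaces the slab $\{|Z_s|<\varepsilon\}$ weighted by $(f'(R_s))^2$ with a slab of \emph{time-dependent} width $\varepsilon_s=\varepsilon/f'(\lambda^2(s))$ weighted by $f'(\lambda^2(s))$, and the asserted ``up to a term vanishing as $\varepsilon\downarrow0$'' is precisely the estimate that needs proving: it requires uniform (in time) control of the occupation approximation, breaks down where $f'(\lambda^2(s))$ vanishes (which a strictly increasing $C^1$ function does not exclude), and is not a consequence of (\ref{occtim}) as you have used it. Your fallback via time-localization does not close the gap either: part (i) with a constant level $c$ yields $\ell^0(R-c)$, the local time of $R$ at the \emph{fixed} level $c$, and there is no a priori continuity of $\ell^0(R-\mu)$ in the curve $\mu$ that would let you pass from $c$ to the moving level $\lambda^2$ by ``continuity of $f'$''. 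A clean way to finish --- and presumably what underlies the cited Corollary 2.11 of \cite{oukrut} --- is to factor $f(R_t)-f(\lambda^2(t))=(R_t-\lambda^2(t))\,V_t$ with $V_t=\int_0^1 f'\bigl(\lambda^2(t)+\theta(R_t-\lambda^2(t))\bigr)d\theta$ continuous and (when $f'>0$) positive, invoke the product formula $\ell^0_t(UV)=\int_0^t V_s\,d\ell^0_s(U)$ for a continuous semimartingale $U$ and a positive continuous factor $V$ with $UV$ a semimartingale, and then use the support property (\ref{support}) to replace $V_s$ by $f'(\lambda^2(s))$ on $\{R_s=\lambda^2(s)\}$.
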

\begin{proof}
For (i) see  \cite[\mbox{Remark, p.222}]{oukrut}. For (ii) see \cite[\mbox{Corollary 2.11}]{oukrut}
\end{proof}\\ \\
For the purposes of this section we indicate two special It\^o-Tanaka formulas in the next lemma. 
The derivation of these formulas takes advantage of the fact that the time dependency is put into a  
semimartingale structure. Lemma \ref{itotanaka} is useful, and allows $\lambda^2$ just to be locally of bounded variation.\\ \\
For $F:\R^+\times \R \to \R$ we set 
$$
LF(t,x)=\frac{\sigma^2}{2}|x|\partial_{xx}F(t,x)+\frac{\sigma^2}{4}(\delta-bx)\partial_xF(t,x),
$$
whenever this makes sense. In what follows we shall use the notations $f^{(\prime)}, f^{(\prime\prime)}$, for 
distributional derivatives in general. \\
\begin{lem}\label{itotanaka}
Let $f$ be a strictly increasing function on $\R$, which is the difference 
of two convex functions. Assume (for simplicity) that  
$f^{(\prime\prime)}$ is locally integrable. Let 
$$
\overline{g}(y):=\gamma \mathbb{I}_{\{y< 0\}}+\frac{\alpha+\gamma}{2}\mathbb{I}_{\{y=0\}}+\alpha\mathbb{I}_{\{y>0\}};\ \ \  \alpha,\gamma, y\in \R.\\ \\
$$
\begin{itemize}
\item[(i)] Put $F(t,x)=f(x-\lambda^2(t))-f(0)$ and  
$$
H(t,x):=\overline{g}(x-\lambda^2(t))F(t,x).
$$
Then $P$-a.s.
\begin{eqnarray*}
H(t,R_t)&=&H(0,R_0) +\int_0^t\overline{g}(R_s-\lambda^2(s))f^{(\prime)}(R_s-\lambda^2(s))\sigma\sqrt{|R_s|}dW_s\\
&&\hspace*{-1.5cm}+\int_0^t\overline{g}(R_s-\lambda^2(s))\left \{LF(s,R_s)ds-f^{(\prime)}(R_s-\lambda^2(s))d\lambda^2(s)\right \}\\
&&+(\alpha p-\gamma(1-p))f'(0)\ell_t^0(R-\lambda^2).
\end{eqnarray*}
\item[(ii)] Put $F(t,x)=f(x)-f(\lambda^2(t))$ and 
$$
H(t,x):=\overline{g}(x-\lambda^2(t))F(t,x).
$$
Then $P$-a.s.
\begin{eqnarray*}
H(t,R_t)&=&H(0,R_0)  +\int_0^t\overline{g}(R_s-\lambda^2(s))f^{\prime}(R_s)\sigma\sqrt{|R_s|}dW_s\\
&&\hspace*{-1cm}+\int_0^t\overline{g}(R_s-\lambda^2(s))\left \{LF(s,R_s)ds-f^{\prime}(\lambda^2(s))d\lambda^2(s)\right \}\\
&&+(\alpha p-\gamma(1-p))\int_0^t f'(\lambda^2(s))d\ell_s^0(R-\lambda^2).
\end{eqnarray*}
\end{itemize}
\end{lem}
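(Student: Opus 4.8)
The plan is to reduce both identities to a combination of classical It\^o--Tanaka calculus for the semimartingale $R$ together with the local-time representation formulas from Lemma~\ref{replocal2}, exploiting that the time dependency enters only through the finite-variation term $\lambda^2(t)$. First I would establish the smooth-coefficient identity away from the curve and then account for the local time accumulated on $\{R_s=\lambda^2(s)\}$.

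For part (i), write $Z_t:=R_t-\lambda^2(t)$, which is a continuous semimartingale with decomposition inherited from (\ref{skewbesq}): its martingale part is $\int_0^t\sigma\sqrt{|R_s|}\,dW_s$ and its finite-variation part is $\int_0^t\frac{\sigma^2}{4}(\delta-bR_s)\,ds-\lambda^2(t)+\lambda^2(0)+(2p-1)\ell^0_t(R-\lambda^2)$. Since $f$ is a difference of convex functions with locally integrable second derivative, and $\overline{g}$ is a step function that is constant on $\{y>0\}$ and on $\{y<0\}$, the product $y\mapsto \overline{g}(y)(f(y)-f(0))$ is again a difference of convex functions: it is $C^1$-smooth off $0$ with the same distributional second derivative $\overline{g}(y)f^{(\prime\prime)}(y)$ there (because $f(0)-f(0)=0$ kills the jump of $\overline{g}$ at the origin after multiplication by $f(y)-f(0)$, which vanishes at $0$), and at $y=0$ it has a kink whose size is governed by the one-sided derivatives $\alpha f'^{+}(0)$ and $\gamma f'^{-}(0)$. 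I would then apply the It\^o--Tanaka formula for differences of convex functions to $H(t,R_t)=(\overline{g}(f(\cdot)-f(0)))(Z_t)$. The absolutely continuous (in the sense of the occupation measure) part produces $\int_0^t \overline{g}(Z_s)f^{(\prime)}(Z_s)\,dZ_s$ plus $\frac12\int_0^t \overline{g}(Z_s)f^{(\prime\prime)}(Z_s)\,d\langle Z\rangle_s$; substituting $d\langle Z\rangle_s=\sigma^2|R_s|\,ds$ and the decomposition of $dZ_s$, and recognizing $\frac{\sigma^2}{2}|R_s|f^{(\prime\prime)}(Z_s)+\frac{\sigma^2}{4}(\delta-bR_s)f^{(\prime)}(Z_s)=LF(s,R_s)$, gives exactly the stochastic integral term and the $\{LF(s,R_s)\,ds-f^{(\prime)}(R_s-\lambda^2(s))\,d\lambda^2(s)\}$ term, together with a spurious contribution $(2p-1)\int_0^t\overline{g}(Z_s)f^{(\prime)}(Z_s)\,d\ell^0_s(R-\lambda^2)$ coming from the reflection term inside $dZ_s$. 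The remaining ingredient is the local-time term at the kink of $\overline{g}(f(\cdot)-f(0))$ at $0$: by the It\^o--Tanaka formula this equals $\tfrac12\bigl(\alpha f'^{+}(0)-\gamma f'^{-}(0)\bigr)\ell^{0+}_t(Z)$ plus symmetric corrections, which I would rewrite using the symmetric-local-time convention; by Lemma~\ref{replocal2}(i), $\ell^0_t(f(R-\lambda^2)-f(0))=\tfrac{f'^{+}(0)}{2}\ell^{0+}_t(R-\lambda^2)+\tfrac{f'^{-}(0)}{2}\ell^{0-}_t(R-\lambda^2)$, and via Lemma~\ref{discont} the one-sided local times of $R-\lambda^2$ are $\ell^{0+}=2p\ell^0-\int_0^\cdot\mathbb{I}_{\{R_s=\lambda^2(s)\}}d\lambda^2(s)$ and $\ell^{0-}=2(1-p)\ell^0+\int_0^\cdot\mathbb{I}_{\{R_s=\lambda^2(s)\}}d\lambda^2(s)$. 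Finally, using the support property (\ref{support}) — which forces $\overline{g}(Z_s)=\tfrac{\alpha+\gamma}{2}$ and $f^{(\prime)}(Z_s)=f'(0)$ on the support of $d\ell^0_s(R-\lambda^2)$ — the spurious $(2p-1)$ term is absorbed into the local-time term, and a bookkeeping of the coefficients collapses everything to $(\alpha p-\gamma(1-p))f'(0)\ell^0_t(R-\lambda^2)$. Part (ii) is entirely analogous: now $F(t,x)=f(x)-f(\lambda^2(t))$, one applies It\^o--Tanaka to the map $x\mapsto \overline{g}(x-\lambda^2(t))(f(x)-f(\lambda^2(t)))$ with $f$ assumed $C^1$, so the only kink of the prefactor $\overline{g}(x-\lambda^2(t))$ occurs at $x=\lambda^2(t)$, i.e. along the curve; the local-time contribution there is $(\alpha-\gamma)f'(\lambda^2(t))$ times (half) the appropriate one-sided local time of $R-\lambda^2$, and using Lemma~\ref{replocal2}(ii) together with Lemma~\ref{discont} and (\ref{support}) one converts this to $(\alpha p-\gamma(1-p))\int_0^t f'(\lambda^2(s))\,d\ell^0_s(R-\lambda^2)$ after the same absorption of the $(2p-1)$ reflection contribution.

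The main obstacle — and the only genuinely delicate point — is the correct handling of the local-time term at the junction: one has to be scrupulous about which version of the one-sided/symmetric local time appears when one differentiates the discontinuous prefactor $\overline{g}$, and to make sure the $(2p-1)\ell^0(R-\lambda^2)$ piece that is hidden inside $dZ_s$ combines with it consistently. This is exactly where the symmetric-local-time convention (\ref{sym}), the representation formulas of Lemma~\ref{replocal2}, and the jump relations of Lemma~\ref{discont} must be invoked in tandem; once the coefficient algebra $\alpha p-\gamma(1-p)$ is verified to come out of $(\alpha-\gamma)$ combined with the $2p$ and $2(1-p)$ weights, the rest is routine. A secondary, purely technical point is justifying the It\^o--Tanaka formula for the product $\overline{g}(x-\lambda^2(t))F(t,x)$ where the time argument sits inside an indicator; this is handled precisely by the trick the paper advertises, namely treating $t\mapsto\lambda^2(t)$ as a finite-variation semimartingale component so that the whole expression becomes a function of the single semimartingale $Z$ (part (i)) or is treated through the occupation-times formula for $R$ and $R-\lambda^2$ simultaneously (part (ii)), keeping the local integrability hypothesis on $f^{(\prime\prime)}$ exactly for this purpose.
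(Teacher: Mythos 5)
Your proposal is correct and follows essentially the same route as the paper: the paper applies the symmetric It\^o--Tanaka formula in two stages (first to $\alpha x^+-\gamma x^-$ composed with the semimartingale $f(R-\lambda^2)-f(0)$, then to $f$ itself), invokes Lemma~\ref{replocal2} and the support property (\ref{support}) to evaluate $\overline{g}$ and $f^{(\prime)}$ on the support of $d\ell^0(R-\lambda^2)$, and finishes with exactly the coefficient algebra $(2p-1)\tfrac{\alpha+\gamma}{2}+\tfrac{\alpha-\gamma}{2}=\alpha p-\gamma(1-p)$; your one-stage application to the difference-of-convex-functions $y\mapsto\overline{g}(y)(f(y)-f(0))$ of $Z=R-\lambda^2$ is the same composite. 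The only caution is to commit to the symmetric version of the Tanaka formula from the start (so the $dZ_s$-integrand is $\overline{g}(Z_s)f^{(\prime)}(Z_s)$ with $\overline{g}(0)=\tfrac{\alpha+\gamma}{2}$ and the kink contributes $\tfrac{\alpha-\gamma}{2}f'(0)\ell^0_t$ directly), which makes the detour through $\ell^{0\pm}$ and Lemma~\ref{discont} unnecessary.
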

\begin{proof}
Since $f$ is the difference of convex functions, we know that $f\in H^{1,1}_{loc}(\R)$. Since $f^{(\prime\prime)}\in L^1_{loc}(\R)$ we obtain $f^{(\prime)}\in H^{1,1}_{loc}(\R)$. Thus in particular 
$f\in C^1(\R)$ and $f^{(\prime\prime)}\in L^1_{loc}(\R)$.\\ \\
(i) Applying the symmetric It\^o-Tanaka formula 
(cf. \cite[\mbox{VI. (1.5) Theorem}]{RYor} for the right (or upper) version), we obtain
\begin{eqnarray*}
H(t,R_t)&=&\alpha(f(R_t-\lambda^2(t))-f(0))^+ -\gamma(f(R_t-\lambda^2(t))-f(0))^-\\
&=&H(0,R_0)+\int_0^t\overline{g}(R_s-\lambda^2(s))df(R_s-\lambda^2(s))
+\frac{\alpha-\gamma}{2}\ell_t^0(f(R-\lambda^2)-f(0)).\\
\end{eqnarray*}
Applying again the symmetric It\^o-Tanaka formula, (\ref{occtim}), and Lemma \ref{replocal2}(i), the right hand side equals 
\begin{eqnarray*}
&&H(0,R_0)+\int_0^t\overline{g}(R_s-\lambda^2(s))f^{(\prime)}(R_s-\lambda^2(s))d(R_s-\lambda^2(s))\\
&&+\int_0^t\overline{g}(R_s-\lambda^2(s))\frac{\sigma^2}{2} |R_s|f^{(\prime\prime)}(R_s-\lambda^2(s))ds
+\frac{\alpha-\gamma}{2}f'(0)\ell_t^0(R-\lambda^2),\\
\end{eqnarray*}
which easily leads to the desired conclusion.\\
(ii) Using Lemma \ref{replocal2}(ii) instead of Lemma \ref{replocal2}(i) the proof of (ii) is nearly the same than the proof of (i). We therefore omit it.
\end{proof}\\ \\
\begin{rem}\label{procedure}
If $\alpha,\gamma$, are strictly positive, then 
$$
H(t,x):=\overline{g}(x-\lambda^2(t))(f(x-\lambda^2(t))-f(0)),
$$
or
$$
H(t,x):=\overline{g}(x-\lambda^2(t))(f(x)-f(\lambda^2(t))),
$$
is strictly increasing in $x$, whenever $f$ is. Moreover functions of this type 
allow to get rid of the local time $\ell^0(R-\lambda^2)$. Below, we will apply Gronwall's 
inequality (see Theorem \ref{Gronwall}) to functions 
$$ 
g(t)=E[H(t,S_t)-H(t,I_t)],
$$
using the It\^o-Tanaka  formula of Lemma \ref{itotanaka} (resp. apply Peskir's It\^o-Tanaka formula in 
Theorem \ref{Ito}), and derive pathwise uniqueness in Theorem \ref{pathunique2} (resp. \ref{pathunique}). For this purpose it is important to 
find nice functions $f$ (see Theorem \ref{pathunique2}, \ref{pathunique}). 
\end{rem}
As an application of the preceding Lemma \ref{itotanaka}, we present the next corollary. 
It provides for some special $\lambda$'s a different proof of the fact that is derived in Remark \ref{nosolution+}(ii) for general time dependent $\lambda$. 
The idea for its proof is similar to the idea used in \cite{hs} to show that the $\alpha$-skew Brownian motion doesn't exist if $|\alpha|>1$ ($\alpha=2p-1$). \\
\begin{cor}\label{nosolution}
(i) Let $R_0=\lambda^2(0)$, and $d\lambda^2(t)=\frac{\sigma^2}{4}\left \{\delta-b \lambda^2(t)\right \}dt$, 
or $d\lambda^2(t)=\frac{\sigma^2 \delta}{4} dt$. 
Then there is no solution to (\ref{skewbesq}), if $|2p-1|>1$.\\
(ii) Let $0<R_0=c\equiv \lambda^2$, Then there is no solution to (\ref{skewbesq}), if $|2p-1|>1$.
\end{cor}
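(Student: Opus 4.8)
The plan is to argue by contradiction. Assume a (weak) solution $R$ to (\ref{skewbesq}) exists with $|2p-1|>1$, i.e.\ $p\notin[0,1]$. In each listed case $\lambda^2$ is absolutely continuous, so Lemma \ref{discont} (together with Lemma \ref{timezero}(iv), which discards $\int_0^t\mathbb{I}_{\{R_s=\lambda^2(s)\}}d\lambda^2(s)$) gives $\ell^{0+}_t(R-\lambda^2)=2p\,\ell^0_t(R-\lambda^2)$ and $\ell^{0-}_t(R-\lambda^2)=2(1-p)\,\ell^0_t(R-\lambda^2)$; since the left-hand sides are nonnegative while $2p$ or $2(1-p)$ is negative, this forces $\ell^0_t(R-\lambda^2)\equiv0$ (the first half of Remark \ref{nosolution+}(ii)). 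Hence, dropping the absolute value by Lemma \ref{timezero}(ii), $R$ solves the \emph{unreflected} CIR (resp.\ squared Bessel) equation, and $Z:=R-\lambda^2$ is a continuous semimartingale with $Z_0=0$.

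For case (i) I would feed this into Lemma \ref{itotanaka}(i) with $(\alpha,\gamma)\in\{(1,0),(0,1)\}$, so that $H(0,R_0)=0$ and, since $\ell^0\equiv0$, the local-time term disappears, and with a test function adapted to the curve. If $d\lambda^2(s)=\frac{\sigma^2}{4}(\delta-b\lambda^2(s))\,ds$, take $f(x)=x$: then $LF(s,R_s)\,ds-f'(Z_s)\,d\lambda^2(s)=-\frac{\sigma^2 b}{4}Z_s\,ds$, and Lemma \ref{itotanaka}(i) collapses to $Z_t^{\pm}=(\text{local martingale})-\frac{\sigma^2 b}{4}\int_0^t Z_s^{\pm}\,ds$; stopping at $\tau_n=\inf\{t:R_t\ge n\}$, taking expectations and letting $n\to\infty$ yields $E[Z_t^{\pm}]\le0$, hence $Z\equiv0$. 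If $d\lambda^2(s)=\frac{\sigma^2\delta}{4}\,ds$, take instead the Kummer function $f(x)=\frac{2}{b}(e^{bx/2}-1)$ (and $f(x)=x$ if $b=0$), tailored so that, because $R_s\ge0$, one computes $LF(s,R_s)=\frac{\sigma^2\delta}{4}f'(Z_s)$; then $LF(s,R_s)\,ds-f'(Z_s)\,d\lambda^2(s)\equiv0$, so $H(\cdot,R_\cdot)$ is a local martingale null at $0$ equal to $f(Z^+)\ge0$ for $(\alpha,\gamma)=(1,0)$, resp.\ $f(Z)\mathbb{I}_{\{Z<0\}}\le0$ for $(\alpha,\gamma)=(0,1)$; a one-signed local martingale null at $0$ vanishes, so again $Z\equiv0$. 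Thus in case (i), $R_t=\lambda^2(t)$ for all $t$ $P$-a.s.; but then $R$ is a deterministic continuous function of bounded variation, so $\langle R\rangle\equiv0$, whereas (\ref{skewbesq}) gives $\langle R\rangle_t=\sigma^2\int_0^t R_s\,ds=\sigma^2\int_0^t\lambda^2(s)\,ds>0$ for $t>0$ (each of the two curves is strictly positive on $(0,\infty)$) — a contradiction, so no solution exists.

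For case (ii), with $\lambda^2\equiv c>0$, I would apply Lemma \ref{itotanaka}(ii) with $F(t,x)=f(x)-f(c)$, where $f$ is a strictly increasing difference-of-convex function with locally integrable second derivative which solves $LF\equiv0$ on $\{x\ge a\}$ for some $0<a<c$ (and $a<\delta/b$ when $b>0$) and is extended affinely on $(-\infty,a]$ — i.e.\ a Kummer function of the second kind (the CIR scale function), regularized near the origin where it would otherwise blow up. Since $d\lambda^2=0$, $\ell^0\equiv0$ and $H(0,R_0)=0$, Lemma \ref{itotanaka}(ii) yields $H(t,R_t)=(\text{local martingale})+\int_0^t\overline{g}(R_s-c)\,LF(s,R_s)\,ds$. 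For $(\alpha,\gamma)=(1,0)$ this integrand vanishes identically (on $\{\overline{g}(R_s-c)\neq0\}=\{R_s\ge c\}$ one has $R_s>a$, hence $LF(s,R_s)=0$), so $H(\cdot,R_\cdot)=(f(R_\cdot)-f(c))^+$ is a nonnegative local martingale null at $0$, hence $\equiv0$, giving $R_t\le c$ for all $t$. For $(\alpha,\gamma)=(0,1)$ the integrand equals $\mathbb{I}_{\{R_s<a\}}LF(s,R_s)\ge0$, so $(f(R_\cdot)-f(c))\mathbb{I}_{\{R_\cdot<c\}}$ is a nonpositive local martingale null at $0$ plus a nondecreasing term, both of which must vanish, giving $R_t\ge c$ for all $t$. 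Therefore $R\equiv c$, and once more $\langle R\rangle_t=\sigma^2ct>0$ for $t>0$ contradicts $\langle R\rangle\equiv0$.

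The reduction to $\ell^0\equiv0$ and the algebra inside Lemma \ref{itotanaka} are essentially routine; the genuine obstacle is the second part of case (ii), where the parabolic cancellation no longer yields $Z\equiv0$ directly but only the one-sided bounds $R\le c$ and $R\ge c$ separately, and where the singular Kummer function of the second kind has to enter — which is admissible precisely because the choice of $\overline{g}$ confines the non-harmonic part of $f$ to a region ($\{R<a\}$) disjoint from the support of $\overline{g}(\cdot-c)$ in the $(\alpha,\gamma)=(1,0)$ case. Some care is also needed with the localization and integrability when passing to expectations, and with the sign bookkeeping for the two choices of $(\alpha,\gamma)$ and the two test functions (which is why the hypothesis of (i) splits into two sub-cases).
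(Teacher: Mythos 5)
Your argument is correct and its skeleton is the paper's: assume a solution, use Lemma \ref{itotanaka} to write $H(\cdot,R_\cdot)$ as a one-signed local martingale plus a one-signed bounded-variation part, deduce $R\equiv\lambda^2$, and contradict non-degeneracy of the quadratic variation (the paper leaves this last contradiction as ``which is impossible''; your $\langle R\rangle_t=\sigma^2\int_0^t\lambda^2(s)\,ds>0$ versus $\langle R\rangle\equiv0$, or equivalently Lemma \ref{timezero}(iv), makes it explicit). The two real differences are these. First, you dispose of the local-time term by deriving $\ell^0(R-\lambda^2)\equiv0$ up front from Lemma \ref{discont} --- which is exactly the first half of Remark \ref{nosolution+}(ii) --- and then run Lemma \ref{itotanaka} twice with $(\alpha,\gamma)=(1,0)$ and $(0,1)$ to get the two one-sided conclusions separately. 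The paper instead makes the single choice $(\alpha,\gamma)=(p-1,-p)$ (resp.\ $(1-p,p)$), which kills the coefficient $\alpha p-\gamma(1-p)$ identically and, precisely because $|2p-1|>1$ permits $\alpha>0>\gamma$, still yields $H\ge0$; this is more economical and shows exactly where the hypothesis on $p$ enters, whereas your route makes the mechanism (no local time on the curve, yet the curve cannot be crossed without accumulating local time) more transparent. Second, in (ii) you take the CIR scale function $f'(x)=x^{-\delta/2}e^{bx/2}$ on $[a,\infty)$, $0<a<c$ (and $a<\delta/b$ if $b>0$), extended affinely below $a$, so that the drift term vanishes on the support of $\overline{g}(\cdot-c)$ in one case and is nonnegative in the other; the paper builds a single $f_g$ with $f_g'(x)=g(x)x^{-\delta/2}e^{bx/2}$, $g$ vanishing to high order at $0$ and changing monotonicity at $c$, so that $\overline{g}(\cdot-c)Lf_g\le0$ globally. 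Both are admissible inputs to Lemma \ref{itotanaka}(ii), and your sign bookkeeping in all sub-cases checks out.
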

\begin{proof}
(i) Let us to the contrary assume that there is a solution. Then we can apply Lemma \ref{itotanaka}(i) 
with $f(x)=x$, and $\alpha=p-1$, $\gamma=-p$, if $p>1$ 
(resp. $\alpha=1-p$, $\gamma=p$, if $p<0$). 
If $d\lambda^2(t)=\frac{\sigma^2}{4}\left \{\delta-b \lambda^2(t)\right \}dt$, it follows 
$$
0\le H(t,R_t)\le \int_0^t\overline{g}(R_s-\lambda^2(s))\sigma\sqrt{R_s}dW_s, \ \ 0\le t<\infty,\\
$$
which holds pathwise, hence also with $t$ replaced by $t\wedge \tau_n$, where 
$\tau_n:=\inf\{t\ge 0||R_t|\ge n\}$. Clearly $\tau_n\nearrow \infty$ $P$-a.s. It follows that the 
$P$-expectation of $H(t,R_t)$ is zero, hence $R\equiv\lambda^2$  $P$-a.s., which is impossible. 
In case  $d\lambda^2(t)=\frac{\sigma^2 \delta}{4} dt$ we 
first note that $R_0=\lambda^2(0)\ge 0$, implies $P$-a.s. 
$\frac{\sigma^2}{2}|R_t|=\frac{\sigma^2}{2}R_t$ for all $t$, by Lemma \ref{timezero}(ii). 
Then we apply Lemma \ref{itotanaka}(i) with $f(x)=e^{\frac{bx}{2}}$ and conclude in the same manner as before 
with $f(x)=x$.\\
(ii) Let us to the contrary assume that there is a solution. Let $g:\R\to\R^+$ be such that
$g(x)=0$, if $x\le 0$, 
$g\in C^1(\R)$, $g(x)=x^{\frac{\delta}{2}+2}$, for $x\in [0,\frac{c}{2}]$.
Suppose further that $g'(x)$ is negative if $x\ge c$, and positive if $x\le c$. Define 
\[ f_g(x):= \left\{ \begin{array}{r@{\quad\quad}l}
 -\int_0^{-x}y^{\frac{\delta}{2}}e^{\frac{by}{2}} dy\ \ & \mbox{ for}\ x<0; \\ 
  \int_0^x g(y)y^{-\frac{\delta}{2}}e^{\frac{by}{2}}dy&\mbox{ for}\ x\ge 0. \end{array} \right. \] \\ 
Then $f_g\in C^1(\R)$ is strictly increasing, with locally integrable second derivative, and 
$$
Lf_g(x)=\frac{\sigma^2}{2}x^{1-\frac{\delta}{2}}e^{\frac{bx}{2}}g'(x)
\mathbb{I}_{x\not=0}.
$$
Now, we can apply Lemma \ref{itotanaka}(ii) 
with $f(x)=f_g(x)$
, and $\alpha=p-1$, $\gamma=-p$, if $p>1$ 
(resp. $\alpha=1-p$, $\gamma=p$, if $p<0$). It follows 
$$
0\le H(t,R_t)= \int_0^t\overline{g}(R_s-c)f'_g(R_s)\sigma\sqrt{R_s}dW_s
+\frac{\sigma^2}{2}\int_0^t \overline{g}(R_s-c)R_s^{1-\frac{\delta}{2}}e^{\frac{bR_s}{2}}g'(R_s)ds.
$$
By our assumptions on $g$, the bounded variation part is non-positive. Thus we may conclude analogously 
to (i), that
$f_g(R_t)=f_g(c)$, and hence $R\equiv c$, which is impossible.
\end{proof}\\ \\
We will make use of the following generalization of Gronwall's inequality.
Its proof can be found in \cite[\mbox{Appendixes, 5.1. Theorem}]{ek}.
\begin{theo}\label{Gronwall}
Let $\mu^+$ be a Borel measure (finite on compacts!) on $[0,\infty)$, let $\varepsilon\ge 0$, and let $g$ be a Borel measurable function that is bounded on bounded intervals and satisfies
$$
0\le g(t)\le \varepsilon+\int_{[0,t)}g(s)\mu^+(ds), \ \ \ t\ge 0.
$$
Then 
$$
g(t)\le \varepsilon e^{\mu^+([0,t))}, \ \ \ t\ge 0.
$$
\end{theo}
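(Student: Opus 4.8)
The plan is to prove the inequality by iterating the hypothesis and showing that the iterated remainder term vanishes. Fix $T>0$; it suffices to establish the bound for $t\in[0,T]$. Put $m(t):=\mu^+([0,t))$, so that $m$ is non-negative, non-decreasing, \emph{left-continuous}, and (since $\mu^+$ is finite on compacts) finite on $[0,T]$, and set $M:=\sup_{s\in[0,T]}g(s)<\infty$. For a non-negative Borel function $\varphi$ bounded on $[0,T]$ define $(\mathcal{I}\varphi)(t):=\int_{[0,t)}\varphi(s)\,\mu^+(ds)$; because $\mu^+$ is finite on compacts, $\mathcal{I}\varphi$ is again bounded on $[0,T]$ and (by Tonelli, all integrands being non-negative) Borel measurable, so the iterates $\mathcal{I}^n\varphi$ are well defined. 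In this notation the hypothesis reads $g\le\varepsilon+\mathcal{I}g$ on $[0,T]$, and $\mathcal{I}$ is monotone and maps non-negative functions to non-negative functions.

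The first step is an elementary sub-lemma: for every integer $k\ge0$ and every $t\in[0,T]$,
$$
\big(\mathcal{I}(m^k)\big)(t)=\int_{[0,t)}m(s)^k\,\mu^+(ds)\ \le\ \frac{m(t)^{k+1}}{k+1}.
$$
I would prove this by checking that $t\mapsto \tfrac{1}{k+1}m(t)^{k+1}-\int_{[0,t)}m(s)^k\,\mu^+(ds)$ is non-decreasing: on the part of $[0,T]$ carrying no atom of $\mu^+$ the two summands have the same differential $m(t)^k\,\mu^+(dt)$, while at an atom $t_0$ of mass $a:=\mu^+(\{t_0\})$ the first summand jumps by $\tfrac{1}{k+1}\big((m(t_0)+a)^{k+1}-m(t_0)^{k+1}\big)$ and the second by $m(t_0)^k a$ (left-continuity of $m$ is exactly what makes the weight $m(t_0)^k$ rather than $m(t_0^+)^k$), and the convexity inequality $(x+a)^{k+1}-x^{k+1}\ge(k+1)x^k a$ for $x,a\ge0$ shows the former dominates. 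Iterating the sub-lemma gives $(\mathcal{I}^n\mathbf{1})(t)\le m(t)^n/n!$ for all $n\ge1$, and hence, by monotonicity, $(\mathcal{I}^n\varphi)(t)\le\big(\sup_{[0,T]}\varphi\big)\,m(t)^n/n!$ for any non-negative $\varphi$ bounded on $[0,T]$.

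Next, iterating $g\le\varepsilon+\mathcal{I}g$ and using monotonicity and non-negativity of $\mathcal{I}$ together with $\mathcal{I}(m^k/k!)\le m^{k+1}/(k+1)!$, one obtains by induction on $n$ that
$$
g(t)\ \le\ \varepsilon\sum_{k=0}^{n-1}\frac{m(t)^k}{k!}\ +\ (\mathcal{I}^n g)(t),\qquad t\in[0,T].
$$
The case $n=1$ is the hypothesis; assuming the bound for $n$, apply $\mathcal{I}$, invoke the sub-lemma termwise, and add $\varepsilon$ to obtain the bound for $n+1$. By the remainder estimate, $(\mathcal{I}^n g)(t)\le M\,m(t)^n/n!\to0$ as $n\to\infty$. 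Letting $n\to\infty$ gives $g(t)\le\varepsilon\sum_{k\ge0}m(t)^k/k!=\varepsilon e^{m(t)}=\varepsilon e^{\mu^+([0,t))}$ for every $t\in[0,T]$, and since $T$ was arbitrary the theorem follows.

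The only genuinely delicate point I expect is the sub-lemma in the presence of atoms of $\mu^+$: one must keep track of the left-continuity of $m$ so that an atom at $t_0$ contributes to $\int_{[0,t)}$ only for $t>t_0$ and with weight $m(t_0)^k$, and then use the convexity inequality to see that the exponential comparison function jumps by at least as much. (A shorter route via reducing to non-decreasing $g$ and differentiating $e^{-m(t)}\,(\mathcal{I}g)(t)$ is possible, but the jump bookkeeping at atoms is if anything more awkward there, so I would keep the iteration argument.) Everything else — measurability and finiteness of the iterates, the induction, and passing to the limit — is routine given that $\mu^+$ is finite on compacts and $g$ is bounded on bounded intervals.
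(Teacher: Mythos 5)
Your proof is correct. Note first that the paper does not actually prove this theorem: it only cites Ethier--Kurtz, \emph{Markov processes}, Appendix, Theorem 5.1, and the argument given there is precisely the iteration scheme you propose, namely iterating $g\le\varepsilon+\mathcal{I}g$ to get $g\le\varepsilon\sum_{k<n}\mathcal{I}^k\mathbf{1}+\mathcal{I}^n g$, bounding $\mathcal{I}^k\mathbf{1}$ by $m^k/k!$, and killing the remainder via the local boundedness of $g$. The only place you deviate is in the proof of the key sub-lemma $\int_{[0,t)}m(s)^k\,\mu^+(ds)\le m(t)^{k+1}/(k+1)$, which you handle by monotonicity of the difference together with jump bookkeeping at the atoms of $\mu^+$; this works (the convexity inequality $(x+a)^{k+1}-x^{k+1}\ge(k+1)x^k a$ does exactly the job at an atom, and the left-continuity of $m$ is used correctly so that the atom is weighted by $m(t_0)^k$), but you can avoid the atomic/continuous decomposition entirely by the symmetrization argument used in the reference: the left-hand side equals $(\mu^+)^{\otimes(k+1)}$ of the set of $(u_1,\dots,u_k,s)\in[0,t)^{k+1}$ with $u_i<s$ for all $i$, and the $k+1$ pairwise disjoint sets obtained by demanding that a fixed coordinate be the strict maximum all have the same measure and are contained in $[0,t)^{k+1}$, whence the bound $m(t)^{k+1}/(k+1)$ with no case distinctions. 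Everything else in your write-up (measurability of the iterates, the induction, the passage to the limit) is sound, and you are right that the half-open domain $[0,t)$ is what makes the statement survive atoms of $\mu^+$ of mass $\ge 1$.
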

We are now prepared to formulate our main theorem.\\
\begin{theo}\label{pathunique2}
Let $f$ be 
a strictly increasing function on $\R$, which is the difference of two convex functions. 
Let $f^{(\prime\prime)}$ be locally integrable.
Let either  $F(t,x)=f(x-\lambda^2(t))-f(0)$  or $F(t,x)=f(x)-f(\lambda^2(t))$. Suppose further that 
\begin{eqnarray}\label{out}
(\partial_t+L)F(t,x)= F(t,x)\mu(dt)+sgn(2p-1)\nu(dt) \ \mbox{ for (a.e.) } x\ge 0,
\end{eqnarray}
where $\mu(dt)=\mu^+(dt)-\mu^-(dt)$ is a signed Borel measure, with continuous positive part $\mu^+(dt)$, $\nu(dt)$ is a positive Borel measure,
(\ref{out}) is in the sense of distributions, and $sgn$ is the point-symmetric sign function, i.e. $sgn(x)=-1$, if $x<0$, $sgn(0)=0$, and $sgn(x)=1$, if $x>0$. Then pathwise uniqueness holds for (\ref{skewbesq}).
\end{theo}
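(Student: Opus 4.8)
The plan is to take two solutions $R^{(1)},R^{(2)}$ of (\ref{skewbesq}) on the same filtered probability space, driven by the same Brownian motion and with $R^{(1)}_0=R^{(2)}_0$ (we may assume this common value is $\ge 0$, $\R^+$ being the natural state space), and to prove that $S:=R^{(1)}\vee R^{(2)}$ and $I:=R^{(1)}\wedge R^{(2)}$ are indistinguishable; since $I\le R^{(i)}\le S$ this yields $R^{(1)}=R^{(2)}$. By Lemma \ref{firstrem}(ii) both $S$ and $I$ are again solutions of (\ref{skewbesq}), and by Lemma \ref{timezero}(ii) they stay in $\R^+$, so every evaluation of $F$, $LF$ and of the right-hand side of (\ref{out}) below takes place where those objects are defined. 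The decisive choice is to take $\alpha=1-p$ and $\gamma=p$ in Lemma \ref{itotanaka}: these are strictly positive since $p\in(0,1)$, so $H(t,x)=\overline{g}(x-\lambda^2(t))F(t,x)$ is strictly increasing in $x$ by Remark \ref{procedure}; and, crucially, the local--time coefficient $\alpha p-\gamma(1-p)=(1-p)p-p(1-p)$ vanishes, so the term $f'(0)\ell^0_t(R-\lambda^2)$ (resp.\ $\int_0^t f'(\lambda^2(s))\,d\ell^0_s(R-\lambda^2)$) drops out of the It\^o--Tanaka formula.

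I would then apply Lemma \ref{itotanaka} --- part (i) or (ii) according to the form of $F$ --- to $R=S$ and to $R=I$. The bounded--variation part there is $\int_0^t\overline{g}(R_s-\lambda^2(s))\{LF(s,R_s)\,ds-f'(R_s-\lambda^2(s))\,d\lambda^2(s)\}$ (resp.\ with $f'(\lambda^2(s))$), whose bracket is precisely $(\partial_t+L)F$, as a measure in the time variable, specialized at $(s,R_s)$. Substituting the hypothesis (\ref{out}), this bracket becomes $F(s,R_s)\,\mu(ds)+sgn(2p-1)\,\nu(ds)$, so the bounded--variation part equals $\int_0^t H(s,R_s)\,\mu(ds)+sgn(2p-1)\int_0^t\overline{g}(R_s-\lambda^2(s))\,\nu(ds)$. (Here one must note that, since by the occupation times formula together with Lemma \ref{timezero} the occupation measure of a solution in the space variable is absolutely continuous, the a.e.--in--$x$ identity (\ref{out}) may legitimately be read along the paths $s\mapsto(s,S_s)$ and $s\mapsto(s,I_s)$.) Subtracting the formula for $I$ from that for $S$, and using $H(0,S_0)=H(0,I_0)$, yields
\begin{eqnarray*}
H(t,S_t)-H(t,I_t)&=&N_t+\int_0^t\big(H(s,S_s)-H(s,I_s)\big)\,\mu(ds)\\
&&+\,sgn(2p-1)\int_0^t\big(\overline{g}(S_s-\lambda^2(s))-\overline{g}(I_s-\lambda^2(s))\big)\,\nu(ds),
\end{eqnarray*}
with $N$ a continuous local martingale, $N_0=0$.

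The heart of the argument is the sign analysis. Since $S\ge I$ and $H(t,\cdot)$ is strictly increasing, $H(s,S_s)-H(s,I_s)\ge 0$; hence $\int_0^t(H(s,S_s)-H(s,I_s))\,\mu^-(ds)\ge 0$, so the $\mu$--integral is bounded above by $\int_0^t(H(s,S_s)-H(s,I_s))\,\mu^+(ds)$. For the $\nu$--term, observe that with $\alpha=1-p,\ \gamma=p$ the function $\overline{g}$ is non-increasing if $p>1/2$ and non-decreasing if $p<1/2$, while $sgn(2p-1)$ is $+1$, resp.\ $-1$, in these two cases; since $S_s-\lambda^2(s)\ge I_s-\lambda^2(s)$, in both cases $sgn(2p-1)\big(\overline{g}(S_s-\lambda^2(s))-\overline{g}(I_s-\lambda^2(s))\big)\le 0$, so the $\nu$--integral is $\le 0$ (the case $p=1/2$ being trivial, as then $sgn(2p-1)=0$ and $\overline{g}$ is constant). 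Therefore
\[
0\ \le\ H(t,S_t)-H(t,I_t)\ \le\ N_t+\int_0^t\big(H(s,S_s)-H(s,I_s)\big)\,\mu^+(ds).
\]
Localising along $\tau_n:=\inf\{t\ge 0:\ |S_t|\vee|I_t|\ge n\}$, on $[0,\tau_n]$ the integrand of $N$ is bounded ($\overline{g}$ is bounded, $\sqrt{|\cdot|}$ and $f'$ are bounded on the relevant compacts, $\lambda^2$ being continuous), so $N^{\tau_n}$ is a true martingale; putting $g_n(t):=E[H(t\wedge\tau_n,S_{t\wedge\tau_n})-H(t\wedge\tau_n,I_{t\wedge\tau_n})]\ge 0$, stopping at $t\wedge\tau_n$, taking expectations and using Tonelli gives $g_n(t)\le\int_0^t g_n(s)\,\mu^+(ds)=\int_{[0,t)}g_n(s)\,\mu^+(ds)$, the last equality because $\mu^+$ is continuous, hence atomless. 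The generalized Gronwall inequality (Theorem \ref{Gronwall}) with $\varepsilon=0$ forces $g_n\equiv 0$; as the integrand is non-negative it vanishes $P$-a.s.\ for each $t$, and letting $n\to\infty$ (with $\tau_n\to\infty$, the solutions being continuous hence non-exploding) gives $H(t,S_t)=H(t,I_t)$ $P$-a.s.\ for every $t$, hence for all $t$ simultaneously by path continuity. Strict monotonicity of $x\mapsto H(t,x)$ then yields $S_t=I_t$, i.e.\ $R^{(1)}=R^{(2)}$, which is pathwise uniqueness.

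I expect the step needing the most care to be the passage from the distributional identity (\ref{out}) to its evaluation along the semimartingale trajectories of $S$ and $I$ --- checking that the exceptional $x$-null set is negligible for the time and bounded--variation measures that occur --- whereas the genuinely new ingredient is merely the choice $\alpha=1-p$, $\gamma=p$, which at one stroke makes $H$ strictly increasing, annihilates the local time of $R-\lambda^2$ in Lemma \ref{itotanaka}, and orients $\overline{g}$ so that the $\nu$-contribution carries the favourable sign.
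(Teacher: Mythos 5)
Your proposal is correct and follows essentially the same route as the paper: choose $\alpha=1-p$, $\gamma=p$ so that the local-time coefficient $\alpha p-\gamma(1-p)$ in Lemma \ref{itotanaka} vanishes and $H$ is strictly increasing, apply that formula to $S=R^{(1)}\vee R^{(2)}$ and $I=R^{(1)}\wedge R^{(2)}$ (solutions by Lemma \ref{firstrem}(ii)), exploit the signs of the $\mu$- and $\nu$-terms exactly as the paper does, and conclude with the generalized Gronwall inequality and strict monotonicity of $H$ in $x$. The only divergence is that you assume $R_0\ge 0$ to confine the paths to $\{x\ge 0\}$, where (\ref{out}) is assumed to hold, whereas the paper instead invokes Lemma \ref{firstrem}(iii) to neglect the event $\Omega\setminus\left(\{S_t>0\}\cap\{I_t\ge 0\}\right)$, which dispenses with any restriction on the initial condition.
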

\begin{proof}
Let $\overline{g}$ be as in Lemma \ref{itotanaka}, 
with $\alpha=1-p$, $\gamma=p$, and
$$
H(t,x):=\overline{g}(x-\lambda^2(t))F(t,x),
$$
Let $R^{(1)},R^{(2)}$, be two solutions to (\ref{skewbesq}) 
with same Brownian motion, same initial condition, and on the 
same filtered probability space $(\Omega, \F, P)$. By Lemma \ref{firstrem} 
we know that $S=R^{(1)}\vee R^{(2)}$, and $I=R^{(1)}\wedge R^{(2)}$, are also solutions to (\ref{skewbesq}). 
Define the stopping time $\tau_n:=\inf\{t\ge 0:|S_t| \ge n\}$.  Then clearly $\tau_n\nearrow \infty$ $P$-a.s. 
Applying  Lemma \ref{itotanaka}, we obtain for $Z=S$, and for $Z=I$, 
\begin{eqnarray*}
E[H(t\wedge\tau_n,Z_{t\wedge\tau_n})]&=&E[H(0,Z_0)]\\
&&\hspace*{-2cm}+E\left [\int_0^{t\wedge\tau_n}\overline{g}(Z_s-\lambda^2(s))
d\left \{\int_0^s LF(u,Z_u)du-\int_0^s f^{(\prime)}(\widetilde{Z}_u)d\lambda^2(u)\right \}\right ],
\end{eqnarray*}
where either $\widetilde{Z}_s=Z_s-\lambda^2(s)$ (in case $F(t,x)=f(x-\lambda^2(t))-f(0)$), or $\widetilde{Z}_s=\lambda^2(s)$ (if $F(t,x)=f(x)-f(\lambda^2(t))$). 
By Lemma \ref{firstrem}(iii) we know that $P$-a.s.
$$
S_{t}\mathbb{I}_{\Omega\setminus\{S_t>0\}\cap\{I_t\ge0\}}=
I_{t}\mathbb{I}_{\Omega\setminus\{S_t>0\}\cap\{I_t\ge0\}} \ \ \ \forall t\ge 0.
$$
We can therefore neglect what happens outside $\{S_t> 0\}\cap\{I_t\ge0\}$. Thus, by assumption (\ref{out})
$$
\hspace*{-4cm}E\left [H(t\wedge\tau_n,S_{t\wedge\tau_n}) -H(t\wedge\tau_n,I_{t\wedge\tau_n})\right ]
$$
$$
= E\left [\int_0^{t\wedge\tau_n} \left( H(s,S_s)-H(s,I_s)\right )\mu(ds)\right ]
$$
$$
+sgn(2p-1)E\left [\int_0^{t\wedge\tau_n} \left( \overline{g}(S_s-\lambda^2(s))-
\overline{g}(I_s-\lambda^2(s))\right )\nu(ds)\right ], 
$$
which is further, since $sgn(2p-1)\overline{g}$ is decreasing, estimated from above by 
\begin{eqnarray*}
E\left [\int_0^{t\wedge\tau_n} \left( H(s,S_{s})-H(s,I_{s})\right )
\mu^+(ds)\right ],
\end{eqnarray*}
and then again, since $H(s,S_{s})-H(s,I_{s})$ is positive, by 
\begin{eqnarray*}
E\left [\int_0^t \left( H(s\wedge\tau_n,S_{s\wedge\tau_n})-
H(s\wedge\tau_n,I_{s\wedge\tau_n})\right )\mu^+(ds)\right ].
\end{eqnarray*}
Applying Fubini's theorem and Theorem \ref{Gronwall}, we obtain that 
$$
E\left [H(t\wedge\tau_n,S_{t\wedge\tau_n}) -H(t\wedge\tau_n,I_{t\wedge\tau_n})\right ]=0, \ \ 0\le t <\infty.
$$
Since $H$ increases in the space variable, for any fixed time, it follows that 
$S_{\cdot\wedge\tau_n}$ and $I_{\cdot\wedge\tau_n}$ are $P$-indistinguishable. 
Letting $n\to \infty$ we see that $S =I$, hence $R^{(1)}= R^{(2)}$, and pathwise uniqueness is shown.
\end{proof}

\begin{cor}\label{pathunique0}
Let $\overline{p}:=sgn(2p-1)$. Pathwise uniqueness holds for 
(\ref{skewbesq}), whenever 
$$
\overline{p}d\lambda^2(s)\le \overline{p}\frac{\sigma^2}{4}\left \{\delta-
\left (\frac{1-\overline{p}}{2}\right )b \lambda^2(s)\right \}ds.
$$ 
\end{cor}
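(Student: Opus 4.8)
The plan is to read off Corollary \ref{pathunique0} from Theorem \ref{pathunique2}, using in both regimes the second type of auxiliary function, $F(t,x)=f(x)-f(\lambda^2(t))$, and the choice $\alpha=1-p$, $\gamma=p$ for $\overline g$ (exactly as in the proof of Theorem \ref{pathunique2}; this is what makes the local time term in Lemma \ref{itotanaka}(ii) vanish, since $\alpha p-\gamma(1-p)=0$). For such an $F$, writing $L$ for the one--dimensional operator $Lf=\frac{\sigma^2}{2}xf''+\frac{\sigma^2}{4}(\delta-bx)f'$, the left--hand side of (\ref{out}) is, as a measure in $t$ for each fixed $x\ge 0$,
\begin{eqnarray*}
(\partial_t+L)F(t,x)&=&(Lf)(x)\,dt-f'(\lambda^2(t))\,d\lambda^2(t),
\end{eqnarray*}
the first summand being the $LF\,dt$ term of Lemma \ref{itotanaka}(ii) and the second the distributional time derivative. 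So the whole task is, in each regime, to pick a strictly increasing $f$ (difference of convex, $f^{(\prime\prime)}$ locally integrable) for which this can be written as $F(t,x)\mu(dt)+sgn(2p-1)\nu(dt)$ with $\mu^+$ continuous and $\nu$ a positive measure under the stated hypothesis; Theorem \ref{pathunique2} then applies word for word.

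For $p\in(0,\frac12)$, so $sgn(2p-1)=-1$, I would take $f(x)=x$ (up to an affine change, the Kummer function $M(-1,\delta/2,bx/2)$). Then $(Lf)(x)=\frac{\sigma^2}{4}(\delta-bx)$, and since $x=F(t,x)+\lambda^2(t)$,
\begin{eqnarray*}
(\partial_t+L)F(t,x)&=&-\frac{\sigma^2b}{4}\,F(t,x)\,dt+\Big(\frac{\sigma^2}{4}\big(\delta-b\lambda^2(t)\big)\,dt-d\lambda^2(t)\Big).
\end{eqnarray*}
So (\ref{out}) holds with $\mu(dt)=-\frac{\sigma^2b}{4}\,dt$ (hence $\mu^+\equiv 0$, trivially continuous) and $\nu(dt)=d\lambda^2(t)-\frac{\sigma^2}{4}(\delta-b\lambda^2(t))\,dt$, which is exactly the positive measure furnished by the hypothesis here.

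For $p\in(\frac12,1)$, so $sgn(2p-1)=+1$, the choice $f(x)=x$ would only yield the (strictly more restrictive) condition $d\lambda^2\le\frac{\sigma^2}{4}(\delta-b\lambda^2)\,dt$, so one must do better. For $b>0$ I would take the first--kind Kummer function $f(x)=M(\delta/2,\delta/2,bx/2)=e^{bx/2}$ (and $f(x)=x$ for $b=0$). This makes the $bx$--terms in $(Lf)(x)=\frac{\sigma^2}{2}xf''(x)+\frac{\sigma^2}{4}(\delta-bx)f'(x)$ cancel, leaving $(Lf)(x)=\frac{\sigma^2\delta}{4}f'(x)=\frac{\sigma^2\delta b}{8}f(x)$, so that $f$ feels only the squared--Bessel drift of dimension $\delta$. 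Using $f(x)=F(t,x)+f(\lambda^2(t))$ and $f'(\lambda^2(t))=\frac{b}{2}e^{b\lambda^2(t)/2}$ gives
\begin{eqnarray*}
(\partial_t+L)F(t,x)&=&\frac{\sigma^2\delta b}{8}\,F(t,x)\,dt+\frac{b}{2}e^{b\lambda^2(t)/2}\Big(\frac{\sigma^2\delta}{4}\,dt-d\lambda^2(t)\Big),
\end{eqnarray*}
so (\ref{out}) holds with $\mu(dt)=\frac{\sigma^2\delta b}{8}\,dt$ (a positive multiple of Lebesgue measure, so $\mu^+$ is continuous) and $\nu(dt)=\frac{b}{2}e^{b\lambda^2(t)/2}\big(\frac{\sigma^2\delta}{4}\,dt-d\lambda^2(t)\big)$; since $e^{b\lambda^2(t)/2}>0$ this is a positive measure exactly because the hypothesis now reads $d\lambda^2(t)\le\frac{\sigma^2\delta}{4}\,dt$ (here $\frac{1-\overline p}{2}=0$). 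The case $b=0$ is the obvious degenerate version, with $\mu=0$.

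For the $f$ above, the regularity hypotheses of Theorem \ref{pathunique2} are immediate ($x$ and $e^{bx/2}$ are strictly increasing and convex on $\R$, with continuous second derivative), and the It\^o--Tanaka bookkeeping is already packaged inside Lemma \ref{itotanaka}(ii) and the proof of Theorem \ref{pathunique2}; the only real content is thus the correct guess of $f$, for which the heuristics of the Introduction are the guide. The point I expect to be genuinely delicate — and the reason first--kind Kummer functions are singled out over second--kind ones — is that $f$ must stay well behaved at $x=0$, where $R$ itself can vanish when $\delta$ is small, whereas $U(\cdot,\delta/2,\cdot)$ would blow up there for $\delta>2$. If one wanted, for $p>\frac12$, to use a genuine $M(a,\delta/2,bx/2)$ with $a>\delta/2$ in place of $e^{bx/2}$, positivity of $\nu$ would come down to the log--concavity bound $(\log f)'(x)\le(\log f)'(0)$ on $[0,\infty)$, obtainable from the Riccati equation $zv'=a-(\delta/2-z)v-zv^{2}$ for $v=(\log f)'$ (with $z=bx/2$) by a first--crossing argument at the level $v\equiv 2a/\delta$; but since $e^{bx/2}$ already attains the stated bound, this refinement is not needed.
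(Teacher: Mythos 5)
Your proposal is correct and follows essentially the same route as the paper: both cases are reduced to Theorem \ref{pathunique2} with $f(x)=x$ for $p<\tfrac12$ and the Kummer/exponential function $f(x)=e^{bx/2}$ for $p>\tfrac12$, $b>0$. The only (immaterial) difference is that you use $F(t,x)=f(x)-f(\lambda^2(t))$ where the paper uses $F(t,x)=f(x-\lambda^2(t))-f(0)$ in the exponential case, which merely rescales $\nu$ by the positive factor $e^{b\lambda^2(t)/2}$ and replaces $\mu$ by a constant multiple of Lebesgue measure.
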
 
\begin{proof}
Let $f(x)=x$, and $F(t,x)=f(x-\lambda^2(t))-f(0)$. Then 
$$
(\partial_t+L)F(t,x)= -\frac{\sigma^2b}{4}F(t,x)dt+\frac{\sigma^2}{4}\left (\delta-b\lambda^2(t)\right )dt-d\lambda^2(t)
$$
Putting $\mu(dt)=-\frac{\sigma^2b}{4}dt$, and 
$$
\nu(dt)=sgn(2p-1)\left \{\frac{\sigma^2}{4}\left (\delta-b\lambda^2(t)\right )dt-d\lambda^2(t)\right \},
$$
we conclude by Theorem \ref{pathunique2} that pathwise uniqueness holds, if 
$$
sgn(2p-1)d\lambda^2(s)\le sgn(2p-1)\frac{\sigma^2}{4}\left \{\delta-b \lambda^2(s)\right \}ds.
$$
This holds for $p\in (0,1)$, and $b\ge 0$. If $2p-1>0$, and $b>0$, we may refine our argument letting $f(x)=e^{\frac{bx}{2}}$. Then
$$
(\partial_t+L)F(t,x)= \frac{b}{2}F(t,x)d\left \{\frac{\sigma^2 \delta}{4}t-\lambda^2(t)\right \}+
\frac{b}{2}\left \{\frac{\sigma^2 \delta}{4}dt-d\lambda^2(t)\right \}\ \mbox{ for a.e. } x\ge 0,
$$
and we apply again  Theorem \ref{pathunique2} with $\mu^+(dt)=\nu(dt)=\frac{b}{2}\left \{\frac{\sigma^2 \delta}{4}dt-d\lambda^2(t)\right \}$, so that pathwise uniqueness holds whenever
$$
d\lambda^2(t)\le \frac{\sigma^2 \delta}{4}dt.
$$
Combining both cases, we obtain the statement.
\end{proof}

\section{The martingale problem and pathwise uniqueness via Peskir's It\^o-Tanaka formula}\label{three}
In this section we shall provide two additional tools to potentially improve existence results and the pathwise uniqueness results of the preceding section. 
The first one (see Proposition \ref{mart}) 
is related to the martingale problem for $R$ and is interesting for its own (see also Remark \ref{alg} for its usefulness).
The second tool is a complement to Theorem \ref{pathunique2} in the sense that Theorem \ref{pathunique} can be applied to \lq\lq true\rq\rq\ time dependent functions.
The general criterion for pathwise uniqueness in Theorem \ref{pathunique2} uses special time dependent functions $F$ built on functions $f$ 
that do not depend on time.  
We will use Peskir's It\^o-Tanaka formula (see $\cite[\mbox{Theorem 2.1}]{peskir}$).
\\ \\ 
Let 
$$
\Gamma(\lambda^2):=\{(s,x)\in \R^+\times\R^+| x=\lambda^2(s)\},
$$
and
$$
C:=\{(s,x)\in \R^+\times\R^+| x<\lambda^2(s)\}, \ \ D:=\{(s,x)\in \R^+\times\R^+| x>\lambda^2(s)\}.
$$\\
For $F:\R^+\times \R\to \R$ we consider the linear operator
\begin{eqnarray*}
{\cal L}F(t,x) & = & \frac{\sigma^2}{2}|x|\partial_{xx}F(t,x)+\frac{\sigma^2}{4}(\delta-bx)\partial_xF(t,x)+
\partial_tF(t,x)\\
& = & (\partial_t+L)F(t,x)
\end{eqnarray*}
whenever this expression makes sense.\\
Let further\\
\begin{eqnarray*}
{\cal M}&:=&C(\R^+\times \R)\cap\{F\in C^{1,2}(\R^+\times \R\setminus \Gamma(\lambda^2))\,|\,
F \mbox{ is } C^{1,2} \mbox{ on } \overline{C} \mbox{ and on }\overline{D}\}.\\
\end{eqnarray*} 
${\cal M}$ is exactly the space of functions for which the It\^o-Tanaka formula is derived in \cite{peskir} (cf. \cite[page 500]{peskir}). 
\begin{prop}\label{mart}
Define
\begin{eqnarray*}
D({\cal L})&:=&C_0(\R^+\times \R)\cap \{F\in {\cal M}\,|\,  (1-p)\partial_xF(t,\lambda^2(t)-)=p\partial_xF(t,\lambda^2(t)+) \}.
\end{eqnarray*} 
Let $F\in D({\cal L})$, and $R$ be a weak solution to (\ref{skewbesq}). Then 
$$
\left (F(t,R_t)-F(0,R_0)-\int_0^t {\cal L}F(s,R_s)ds\right )_{t\ge 0}
$$
is a $P$-martingale.\\
\end{prop}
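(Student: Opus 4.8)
The plan is to apply the symmetric It\^o-Tanaka formula to $F(t,R_t)$ for $F\in D(\mathcal L)$, treating the time dependency as part of a semimartingale structure exactly as in Lemma \ref{itotanaka}. First I would reduce to the case where $F$ is the difference of two functions, each of which is $C^{1,2}$ on one side of the curve $\Gamma(\lambda^2)$ and smoothly extendable; concretely, writing $x\mapsto F(t,x)$ as a difference of convex-type functions on $\R$ for each fixed $t$, one can decompose the second space derivative $\partial_{xx}F$ into an $L^1_{loc}$ part plus a measure supported on $\Gamma(\lambda^2)$. The key input is that, by Lemma \ref{timezero}(iv), $R$ spends zero Lebesgue time on $\lambda^2$, so the Lebesgue-absolutely-continuous part of the dynamics only sees the (bounded, by the definition of $D(\mathcal L)$) values $\partial_t F$, $\partial_{xx}F$, $\partial_x F$ away from $\Gamma(\lambda^2)$, while the singular contribution at the curve is captured entirely through the local time $\ell^0(R-\lambda^2)$.

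The main steps are as follows. Step one: apply It\^o's formula (or the symmetric It\^o-Tanaka formula as in the proof of Lemma \ref{itotanaka}(i)) to $F(t,R_t)$, using that $t\mapsto (R_t,\lambda^2(t))$ is a continuous semimartingale and that $F$ is $C^{1,2}$ off the curve. This produces the martingale term $\int_0^t \partial_x F(s,R_s)\,\sigma\sqrt{|R_s|}\,dW_s$, the Lebesgue term $\int_0^t \mathcal L F(s,R_s)\,ds$ (using Lemma \ref{timezero}(iv) to assert that this equals the naive integral, the ambiguity on $\Gamma(\lambda^2)$ being negligible), a term $(2p-1)\int_0^t \partial_x F(s,\lambda^2(s))\,d\ell^0_s(R-\lambda^2)$ coming from the local-time part of the SDE (\ref{skewbesq}) via the support property (\ref{support}), and a local-time term coming from the jump of $\partial_x F$ across the curve. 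Step two: compute the coefficient of $\ell^0(R-\lambda^2)$. Using Lemma \ref{discont}, which relates $\ell^{0\pm}(R-\lambda^2)$ to $\ell^0(R-\lambda^2)$ and to $\int_0^\cdot \mathbb{I}_{\{R_s=\lambda^2(s)\}}d\lambda^2(s)$, together with the occupation-times identity (\ref{occtim}) expressing the jump-in-space contribution as $\tfrac12\big(\partial_xF(s,\lambda^2(s)+)\,d\ell^{0+}_s + \partial_xF(s,\lambda^2(s)-)\,d\ell^{0-}_s\big)$ up to $R_0=\lambda^2(0)$ boundary terms, I would collect all $\ell^0(R-\lambda^2)$-contributions. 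The transmission condition $(1-p)\partial_x F(t,\lambda^2(t)+)=p\,\partial_x F(t,\lambda^2(t)-)$ in the definition of $D(\mathcal L)$ is precisely what makes this total coefficient vanish identically (the same algebra that underlies the $(\alpha p-\gamma(1-p))$ prefactor in Lemma \ref{itotanaka}).

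Step three: having killed the local-time terms, the decomposition reads $F(t,R_t)=F(0,R_0)+\int_0^t \partial_x F(s,R_s)\sigma\sqrt{|R_s|}\,dW_s+\int_0^t\mathcal L F(s,R_s)\,ds$, so it remains to check that the stochastic integral is a genuine martingale, not merely a local one. This is where the boundedness assumptions in $D(\mathcal L)$ and the compact support $F\in C_0(\R^+\times\R)$ come in: since $F$ and hence, on any bounded time interval, $\partial_x F$ and $\partial_{xx}F$ are bounded (combining compact support with the one-sided boundedness near $\Gamma(\lambda^2)$), and since $E[\int_0^t \sigma^2|R_s|\,ds]<\infty$ by property (ii) of a weak solution, the integrand $(\partial_x F(s,R_s))^2\sigma^2|R_s|$ has finite expected integral on $[0,t]$; a standard localization with $\tau_n=\inf\{s:|R_s|\ge n\}$ followed by dominated convergence upgrades the local martingale to a true martingale, and likewise justifies that $\int_0^t\mathcal L F(s,R_s)\,ds$ is integrable.

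The main obstacle I expect is Step two: carefully accounting for the local-time contributions when $\lambda^2$ is only $C^0$ and locally of bounded variation, so that $d\lambda^2$ is a genuine signed measure and $\partial_x F$ is merely $C^0$ off the (moving) curve with one-sided limits at it. One must be sure that the various local times $\ell^{0\pm}, \ell^0$ of $R-\lambda^2$ combine exactly as dictated by Lemma \ref{discont} and Lemma \ref{replocal2}, with no leftover $\int_0^\cdot \mathbb{I}_{\{R_s=\lambda^2(s)\}}d\lambda^2(s)$ term surviving — the cancellation of that term with part of the $\partial_x F$-jump contribution, using the transmission condition, is the delicate bookkeeping at the heart of the proof. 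Once that algebra is in place, the rest is routine.
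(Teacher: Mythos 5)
Your overall architecture is the same as the paper's: expand $F(t,R_t)$ by a generalized It\^o formula producing a $dW$-term, a $ds$-term equal to $\int_0^t{\cal L}F(s,R_s)\,ds$ (with Lemma \ref{timezero}(iv) disposing of the ambiguity on $\Gamma(\lambda^2)$), and local-time terms on the curve; kill the local-time terms with the transmission condition; then upgrade the local martingale to a true martingale using the boundedness built into $D({\cal L})$ and property (ii) of a weak solution. Your Step three matches the paper's closing argument. The difference — and the genuine gap — is in how you obtain the It\^o expansion. The paper's proof consists of a single application of Peskir's change-of-variable formula with local time on curves (\cite[Theorem 2.1]{peskir}), which is precisely engineered for a function that is genuinely time dependent and $C^{1,2}$ only off a moving curve. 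You propose instead to reproduce it via the semimartingale trick of Lemma \ref{itotanaka}. That trick, however, only works for the composite forms $F(t,x)=f(x-\lambda^2(t))-f(0)$ and $F(t,x)=f(x)-f(\lambda^2(t))$, where the whole time dependence is pushed into the semimartingales $R-\lambda^2$ and $\lambda^2$ and a single time-independent convex decomposition of $f$ suffices. A general $F\in D({\cal L})$ is not of this form: the convex decomposition of $x\mapsto F(t,x)$ now varies with $t$, the occupation-times formula (\ref{occtim}) only controls the fixed-level local times $\ell^a(X)$, and the singular part of $\partial_{xx}F(t,\cdot)$ sits at the \emph{moving} level $a=\lambda^2(s)$. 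Identifying that contribution with $d\ell^0_s(R-\lambda^2)$, and showing that no residual $\int_0^\cdot\mathbb{I}_{\{R_s=\lambda^2(s)\}}d\lambda^2(s)$ term survives, is exactly the content of Peskir's theorem; it is not derivable from Lemma \ref{discont}, (\ref{occtim}) and Lemma \ref{replocal2} alone. So the "delicate bookkeeping" you defer to Step two is not bookkeeping but the main missing ingredient: either cite the change-of-variable formula, or be prepared to reprove it.

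A second, smaller point: you assert, without computing, that the condition printed in $D({\cal L})$ makes the local-time coefficient vanish. Carry out the algebra. The reflection term of (\ref{skewbesq}) contributes $\tfrac{2p-1}{2}\bigl(\partial_xF(s,\lambda^2(s)+)+\partial_xF(s,\lambda^2(s)-)\bigr)d\ell^0_s$ and the jump of $\partial_xF$ contributes $\tfrac12\bigl(\partial_xF(s,\lambda^2(s)+)-\partial_xF(s,\lambda^2(s)-)\bigr)d\ell^0_s$, so the total coefficient is $p\,\partial_xF(s,\lambda^2(s)+)-(1-p)\,\partial_xF(s,\lambda^2(s)-)$. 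This vanishes precisely when $p\,\partial_xF(t,\lambda^2(t)+)=(1-p)\,\partial_xF(t,\lambda^2(t)-)$ — consistent with the prefactor $\alpha p-\gamma(1-p)$ of Lemma \ref{itotanaka} vanishing for the choice $\alpha=1-p$, $\gamma=p$ used throughout Section \ref{three} — and not, for $p\neq\tfrac12$, under the condition as literally written in the definition of $D({\cal L})$. Taking the cancellation on faith would let this discrepancy pass unnoticed; an explicit computation is needed here in any complete proof.
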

\begin{proof}
First observe that $\int_0^t {\cal L}F(s,R_s)ds$, $F\in D({\cal L})$, is well-defined by 
Lemma \ref{timezero}(iv). 
By \cite[\mbox{Theorem 2.1}]{peskir} and Lemma \ref{timezero}(iv), we obtain $P$-a.s.
\begin{eqnarray}\label{itos}
F(t,R_t)&=&F(t, R_0)+\int_0^t \partial_tF(s,R_s)ds \nonumber \\
&&+\int_0^t\partial_xF(s,R_s)\sigma\sqrt{|R_s|}dW_s\nonumber \\
&&+\int_0^t\frac{\sigma^2}{4}(\delta-bR_s)\partial_xF(s,R_s)ds\nonumber \\
&&+\int_0^t\frac12(\partial_xF(s,\lambda^2(s)+)+\partial_xF(s,\lambda^2(s)-))(2p-1)d\ell_s^0(R-\lambda^2)\nonumber \\
&&+\int_0^t\frac{\sigma^2}{2}|R_s|\partial_{xx}F(s,R_s)ds\nonumber \\
&&+\frac12\int_0^t(\partial_xF(s,R_s+)-\partial_xF(s,R_s-))d\ell_s^0(R-\lambda^2).
\end{eqnarray}
Since $F\in D({\cal L})$ we have 
$$
\partial_xF(s,\lambda^2(s)+)-\partial_xF(s,\lambda^2(s)-)=\frac{1-2p}{(1-p)}\partial_xF(s,\lambda^2(s)+),
$$
and 
$$
\partial_xF(s,\lambda^2(s)+)+\partial_xF(s,\lambda^2(s)-)=\frac{1}{(1-p)}\partial_xF(s,\lambda^2(s)+),
$$
so that the expressions with $\ell^0_s(R-\lambda^2)$ in (\ref{itos}) cancel each other. Therefore
$$
F(t,R_t)-F(0,R_0)-\int_0^t {\cal L}F(s,R_s)ds=\int_0^t\partial_xF(s,R_s)\sigma\sqrt{|R_s|}dW_s.
$$
By our further assumptions on $F$ the left hand side is square integrable and thus the result follows.
\end{proof}
\begin{rem}\label{alg}
$D({\cal L})$ is an algebra of functions that separates (at least for $\lambda^2\in C^1(\R^+)$) the points of $\R^+\times \R$, thus well suited as a starting point to study existence and uniqueness 
in law of $R$ (cf. e.g. \cite[\mbox{chapter 4}]{ek}). In particular, if only the expectation of $R_t$ is uniquely determined through (\ref{skewbesq}) for any fixed $t\ge 0$, 
then pathwise uniqueness follows for (\ref{skewbesq}) (see Remark \ref{first}(i)).
\end{rem}
By Lemma \ref{timezero}(iv) 
$$
\left (\int_0^t G(s,R_s){\cal{L}}H(s,R_s)ds\right )_{t\ge 0}\\
$$\\
is well-defined for any $H\in {\cal M}$, and $G$ bounded and measurable.
\begin{lem}\label{Ito}
Let $F\in C^{1,2}(\R^+\times\R)$, such that $F(t,\lambda^2(t))=0$ for all $t\ge 0$. Set $H(t,x)=\overline{g}(x-\lambda^2(s))F(t,x)$, 
where $\overline{g}$ is defined as in Lemma 
\ref{itotanaka}. Then 
$H\in {\cal M}$, and 
\begin{eqnarray*}
&&H(t,R_t) = H(0,R_0)+\int_0^t\overline{g}(R_s-\lambda^2(s))\partial_xF(s,R_s)\sigma\sqrt{|R_s|}dW_s\\
&&+\int_0^t \overline{g}(R_s-\lambda^2(s)){\cal{L}}F(s,R_s)ds
+\left (\alpha p-\gamma(1-p)\right )
\int_0^t\partial_xF(s,\lambda^2(s))d\ell^0_s(R-\lambda^2).
\end{eqnarray*}
\end{lem}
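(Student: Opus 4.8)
The plan is to apply Peskir's It\^o--Tanaka formula \cite[Theorem 2.1]{peskir} to $H(t,R_t)$, in exactly the same way as it was applied to $F(t,R_t)$ in the proof of Proposition \ref{mart}, and then to reorganize the resulting local time contributions. First I would verify that $H\in{\cal M}$. Off $\Gamma(\lambda^2)$ the factor $\overline g(x-\lambda^2(t))$ is locally constant in $(t,x)$, so $H$ inherits the $C^{1,2}$ regularity of $F$ there, with $\partial_tH=\overline g(x-\lambda^2(t))\partial_tF$, $\partial_xH=\overline g(x-\lambda^2(t))\partial_xF$, $\partial_{xx}H=\overline g(x-\lambda^2(t))\partial_{xx}F$; one-sided limits of these at the curve exist because $F\in C^{1,2}(\R^+\times\R)$, and in particular $\partial_xH(t,\lambda^2(t)+)=\alpha\,\partial_xF(t,\lambda^2(t))$ while $\partial_xH(t,\lambda^2(t)-)=\gamma\,\partial_xF(t,\lambda^2(t))$. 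Continuity of $H$ across $\Gamma(\lambda^2)$ — the property that really distinguishes $H$ from a generic product with the step function $\overline g$ — follows from the hypothesis $F(t,\lambda^2(t))=0$, which forces both one-sided values of $H$ at the curve to vanish; this is also the reason that no spurious Dirac term arises from differentiating the product, so that $H$ is genuinely of the class to which Peskir's formula applies.

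Next I would invoke \cite[Theorem 2.1]{peskir} to obtain the analogue of (\ref{itos}) with $H$ in place of $F$: a $\partial_tH(s,R_s)\,ds$ term, the parts coming from integrating $\tfrac12(\partial_xH(s,R_s+)+\partial_xH(s,R_s-))$ against $dR_s$ (namely a $dW_s$-stochastic integral, a $\tfrac{\sigma^2}4(\delta-bR_s)\,ds$ drift, and a term $\tfrac{2p-1}2(\partial_xH(s,\lambda^2(s)+)+\partial_xH(s,\lambda^2(s)-))\,d\ell^0_s(R-\lambda^2)$ from the reflection part of $dR_s$), a $\tfrac12\partial_{xx}H(s,R_s)\sigma^2|R_s|\,ds$ term, and finally the genuine Peskir jump term $\tfrac12(\partial_xH(s,\lambda^2(s)+)-\partial_xH(s,\lambda^2(s)-))\,d\ell^0_s(R-\lambda^2)$. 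By Lemma \ref{timezero}(iv) the set $\{R_s=\lambda^2(s)\}$ has zero Lebesgue measure, so every $ds$-integral may be evaluated with the off-curve expressions, i.e.\ with $\overline g(R_s-\lambda^2(s))$ times $\partial_tF$, $\partial_xF$, $\partial_{xx}F$; these combine into $\int_0^t\overline g(R_s-\lambda^2(s)){\cal L}F(s,R_s)\,ds$ and $\int_0^t\overline g(R_s-\lambda^2(s))\partial_xF(s,R_s)\sigma\sqrt{|R_s|}\,dW_s$, which are well defined since ${\cal L}F$ is continuous and $\sqrt{|R_\cdot|}$, $|R_\cdot|$ are locally bounded.

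It then remains only to collect the two local time integrals. Substituting the one-sided values of $\partial_xH$ computed above, the total coefficient of $d\ell^0_s(R-\lambda^2)$ is $\tfrac{2p-1}2(\alpha+\gamma)\partial_xF(s,\lambda^2(s))+\tfrac12(\alpha-\gamma)\partial_xF(s,\lambda^2(s))=\bigl(\alpha p-\gamma(1-p)\bigr)\partial_xF(s,\lambda^2(s))$, which is the claimed expression. I expect the only delicate point to be the bookkeeping around the curve: justifying that Peskir's formula is legitimately applicable to $H$ — which reduces to the membership $H\in{\cal M}$ together with the vanishing of the would-be Dirac term, both secured by $F(t,\lambda^2(t))=0$ — and keeping the symmetric-local-time conventions consistent with \cite{peskir} and \cite[VI.(1.5)]{RYor} so that the two $d\ell^0$ contributions add with the correct weights $p$ and $1-p$.
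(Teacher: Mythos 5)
Your proposal is correct and follows exactly the route the paper takes: apply Peskir's It\^o--Tanaka formula \cite[Theorem 2.1]{peskir} to $H$, use Lemma \ref{timezero}(iv) to evaluate the $ds$-integrals with the off-curve expressions, and combine the reflection contribution $\tfrac{2p-1}{2}(\partial_xH(+)+\partial_xH(-))\,d\ell^0$ with the jump term $\tfrac12(\partial_xH(+)-\partial_xH(-))\,d\ell^0$ via $\partial_xH(t,\lambda^2(t)\pm)=\alpha\,(\text{resp. }\gamma)\,\partial_xF(t,\lambda^2(t))$ to get the coefficient $\alpha p-\gamma(1-p)$. The paper's proof is just a terse version of this same argument (it records the identities for $\partial_xH(+)\mp\partial_xH(-)$ and refers back to the proof of Proposition \ref{mart}), and your additional care in checking $H\in{\cal M}$ via $F(t,\lambda^2(t))=0$ is precisely what the paper dismisses as ``clear.''
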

\begin{proof}
The first statement is clear. 
Applying $\cite[\mbox{Theorem 2.1}]{peskir}$, Lemma \ref{timezero}(iv), and noting that 
$$
\partial_xH(s,\lambda^2(s)+)-\partial_xH(s,\lambda^2(s)-)=(\alpha-\gamma)\partial_xF(s,\lambda^2(s)),
$$
and 
$$
\partial_xH(s,\lambda^2(s)+)+\partial_xH(s,\lambda^2(s)-)=(\alpha+\gamma)\partial_xF(s,\lambda^2(s)),
$$\\
similarly to the proof of Proposition \ref{mart} we get the result.
\end{proof}\\

\begin{theo}\label{pathunique}
Let $\beta(t)\in L^1_{loc}(\R)$. Let $F\in C^{1,2}(\R^+\times \R)$ be such that $F(t,x)$ is strictly increasing in $x$ for every fixed $t\ge 0$, and 
$$
F(t,\lambda^2(t))=0 \ \ \ \forall t\ge 0.
$$
Let $H(t,x):=\overline{g}(x-\lambda^2(s))F(t,x)$,  
where $\overline{g}$ is as in Lemma \ref{itotanaka}, 
with $\alpha=1-p$, $\gamma=p$. Suppose further that
\begin{eqnarray}\label{out2}
{\cal{L}}H(t,x)= \beta(t) H(t,x)+\overline{g}(x-\lambda^2(t))v(t), 
\mbox{ for } (t,x)\in \R^+\times \R^+\setminus \Gamma(\lambda^2)
\end{eqnarray}
where $v\ge 0$, if $p>\frac12$, or $v\le 0$, if $p<\frac12$. 
Then pathwise uniqueness holds for (\ref{skewbesq}).
\end{theo}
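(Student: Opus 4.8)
The plan is to run the argument of Theorem~\ref{pathunique2}, but now exploiting $\lambda^2\in C^1(\R^+)$ to use the It\^o--Tanaka formula of Lemma~\ref{Ito} in place of Lemma~\ref{itotanaka}. Let $R^{(1)},R^{(2)}$ be two solutions of (\ref{skewbesq}) on the same space, with the same Brownian motion and $R^{(1)}_0=R^{(2)}_0$. By Lemma~\ref{firstrem}(ii) the processes $S:=R^{(1)}\vee R^{(2)}$ and $I:=R^{(1)}\wedge R^{(2)}$ are again solutions, and by Lemma~\ref{firstrem}(iii), $S_s=I_s$ on the complement of $\{S_s>0\}\cap\{I_s\ge0\}$, so that $S_s\neq I_s$ forces $S_s>0$ and $I_s\ge0$. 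Put $\tau_n:=\inf\{t\ge0:|S_t|\ge n\}\nearrow\infty$. Applying Lemma~\ref{Ito} to $Z=S$ and to $Z=I$, observing that with $\alpha=1-p$, $\gamma=p$ the local-time coefficient $\alpha p-\gamma(1-p)$ vanishes and that the stopped stochastic integral is a genuine martingale (its integrand is bounded on $[0,t]\times[-n,n]$), and then taking expectations and subtracting (the initial terms agree since $S_0=I_0$), I obtain
$$
E\big[H(t\wedge\tau_n,S_{t\wedge\tau_n})-H(t\wedge\tau_n,I_{t\wedge\tau_n})\big]
=E\Big[\int_0^{t\wedge\tau_n}\!\big(\overline g(S_s-\lambda^2(s)){\cal L}F(s,S_s)-\overline g(I_s-\lambda^2(s)){\cal L}F(s,I_s)\big)ds\Big].
$$

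Next I would feed in the hypothesis (\ref{out2}). Since $\overline g(\,\cdot-\lambda^2(t))$ is locally constant away from $\Gamma(\lambda^2)$, one has ${\cal L}H(t,x)=\overline g(x-\lambda^2(t)){\cal L}F(t,x)$ for $(t,x)\notin\Gamma(\lambda^2)$, so (\ref{out2}) reads $\overline g(x-\lambda^2(t)){\cal L}F(t,x)=\beta(t)H(t,x)+\overline g(x-\lambda^2(t))v(t)$ for $x\ge0$, $x\neq\lambda^2(t)$. By Lemma~\ref{timezero}(iv) the times $s$ with $Z_s=\lambda^2(s)$ are Lebesgue-null; on $\{S_s\neq I_s\}$ both $S_s,I_s\in\R^+$, where (\ref{out2}) applies, while on $\{S_s=I_s\}$ the integrand of the difference above is identically $0$. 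Hence that integrand equals, for a.e.\ $s$,
$$
\beta(s)\big(H(s,S_s)-H(s,I_s)\big)+v(s)\big(\overline g(S_s-\lambda^2(s))-\overline g(I_s-\lambda^2(s))\big).
$$
With $\alpha=1-p$, $\gamma=p$ the step function $\overline g$ is nonincreasing when $p\ge\tfrac12$ and nondecreasing when $p\le\tfrac12$, so together with $I_s\le S_s$ and the sign assumption on $v$ the $v$-term is $\le0$. Using moreover that $H(s,\cdot)$ is strictly increasing (cf.\ Remark~\ref{procedure}) and $S\ge I$, hence $H(s,S_s)-H(s,I_s)\ge0$, that $\beta^+:=\max(\beta,0)\in L^1_{loc}$, and that $H(s,Z_s)=H(s\wedge\tau_n,Z_{s\wedge\tau_n})$ for $s\le\tau_n$, I arrive at
$$
E\big[H(t\wedge\tau_n,S_{t\wedge\tau_n})-H(t\wedge\tau_n,I_{t\wedge\tau_n})\big]
\le E\Big[\int_0^{t}\beta^+(s)\big(H(s\wedge\tau_n,S_{s\wedge\tau_n})-H(s\wedge\tau_n,I_{s\wedge\tau_n})\big)ds\Big],
$$
the domain of integration having been enlarged to $[0,t]$ since the integrand is nonnegative.

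Finally I would set $g(t):=E[H(t\wedge\tau_n,S_{t\wedge\tau_n})-H(t\wedge\tau_n,I_{t\wedge\tau_n})]$, which is nonnegative and, by continuity of $H$ together with $|S_{\cdot\wedge\tau_n}|,|I_{\cdot\wedge\tau_n}|\le n$, bounded on bounded intervals. By Tonelli's theorem the last display becomes $0\le g(t)\le\int_{[0,t)}g(s)\,\mu^+(ds)$ with $\mu^+(ds):=\beta^+(s)\,ds$ absolutely continuous and finite on compacts, so Theorem~\ref{Gronwall} (with $\varepsilon=0$) gives $g\equiv0$. Since $H(t,\cdot)$ is strictly increasing, $g(t)=0$ forces $S_{t\wedge\tau_n}=I_{t\wedge\tau_n}$ $P$-a.s.\ for every $t$; letting $n\to\infty$ and using path continuity yields $S=I$, i.e.\ $R^{(1)}=R^{(2)}$ up to indistinguishability, which is pathwise uniqueness.

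I expect the only genuinely delicate point to be the bookkeeping around (\ref{out2}): establishing ${\cal L}H=\overline g(\,\cdot-\lambda^2){\cal L}F$ off $\Gamma(\lambda^2)$ and then combining Lemma~\ref{timezero}(iv) with Lemma~\ref{firstrem}(iii) to legitimise substituting the right-hand side of (\ref{out2}) inside the $ds$-integral, so that this substitution is invoked only where $x\ge0$ and $x\neq\lambda^2(t)$ (outside that set the difference of the two drift integrands is identically zero and need not be analysed). The remaining ingredients---the martingale property of the stopped stochastic integral, the sign of the $v$-term, and the generalised Gronwall inequality---go through exactly as in the proof of Theorem~\ref{pathunique2}.
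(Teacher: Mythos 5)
Your argument is correct and is precisely the proof the paper intends: the paper's own ``proof'' merely states that one repeats the argument of Theorem~\ref{pathunique2}, and your write-up does exactly that, replacing Lemma~\ref{itotanaka} by Lemma~\ref{Ito}, noting that $\alpha p-\gamma(1-p)=0$ for $\alpha=1-p$, $\gamma=p$, handling the $v$-term via the monotonicity of $\overline g$ and the sign hypothesis, and closing with the generalized Gronwall inequality. The points you flag as delicate (the identity ${\cal L}H=\overline g(\cdot-\lambda^2){\cal L}F$ off $\Gamma(\lambda^2)$, and the use of Lemma~\ref{timezero}(iv) together with Lemma~\ref{firstrem}(iii) to justify substituting (\ref{out2}) under the $ds$-integral) are handled correctly.
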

\begin{proof}
The proof is exactly the same than the proof of Theorem \ref{pathunique2}. We therefore omit it.
\end{proof}
\begin{rem}
Theorem \ref{pathunique} shows that Peskir's It\^o-Tanaka formula \cite[Theroem 2.1]{peskir} can be used to obtain a pathwise uniqueness criterion 
corresponding to \rq\rq true\lq\lq\  time-dependent functions. We could also have used \cite[Theroem 2.1]{peskir} to to obtain  
the full criterion of Corollary \ref{pathunique0}. In fact, one first shows Lemma \ref{itotanaka}(i) for $f\in C^2(\R)$ accordingly to \cite[Remark 2.2.2.]{peskir} by applying \cite[Theroem 2.1]{peskir} to the semimartingale $\widetilde{X}_t=X_t-\lambda^2(t)$.
By this one obtains Theorem \ref{pathunique2} for $F(t,x)=f(x-\lambda^2(t))-f(0)$, $f\in C^2(\R)$, which is sufficient for the proof of Corollary \ref{pathunique0}. The formula from \cite[Theroem 2.1]{peskir} has been generalized in \cite[Theorem 2.1]{peskir3} to the multidimensional case, and
in \cite{Tr6} multidimensional skew reflected diffusions with singular coefficients have been constructed. Although the multidimensional case is quite different from the special case of dimension one  we think 
that the formula of \cite[Theorem 2.1]{peskir3} might be useful to obtain pathwise uniqueness results for general skew reflected diffusions in higher dimensions. In particular, it might be useful for the skew reflected diffusions of \cite{Tr6}.
\cite[Theorem 2.1]{peskir3} might also be useful to formulate the martingale problem for higher dimensional skew reflected diffusions similarly to Proposition \ref{mart}.
\end{rem}

\end{document}